\theoremstyle{plain}
\newtheorem{lem}{Lemma}
\newtheorem{cor}[lem]{Corollary}
\newtheorem{prop}[lem]{Proposition}
\newtheorem{thm}[lem]{Theorem}
\newtheorem{conj}[lem]{Conjecture}
\theoremstyle{definition}
\newtheorem{disc}[lem]{Remark}
\newtheorem{ex}[lem]{Example}
\newcommand{\Hom}{\operatorname{Hom}}
\title[Huneke-Wiegand Conjecture for numerical semigroup rings]{Huneke-Wiegand Conjecture for complete intersection numerical semigroup rings}
\author{P. A. Garc\'{\i}a-S\'anchez}
\address{Departamento de \'Algebra, Universidad de Granada, E-18071 Granada, Espa\~na}
\email{pedro@ugr.es}
\thanks{The first author is supported by the projects MTM2010-15595 and FQM-343,  FQM-5849, and FEDER funds.}
\author{M. J. Leamer}
\email{micahleamer@gmail.com}
\thanks{Part of this work was written while the second author was visiting the Universidad de Granada with a GENIL-YTR grant}
\begin{document}

\begin{abstract}
We give a positive answer to the Huneke-Wiegand Conjecture for monomial ideals over free numerical semigroup rings, and for two generated monomial ideals over complete intersection numerical semigroup rings. 
\end{abstract}

\maketitle
It is often the case that open problems in ring theory remain difficult when specialized to numerical semigroup rings. In such instances it may be beneficial to gain perspective on the problem by trying to tackle its number theoretic analog. Since the integral closure of a numerical semigroup ring is just the polynomial ring in one variable, this perspective seems all the more reasonable for problems where the case for integrally closed rings is much easier to solve.    

If $R$ is a one-dimensional integrally closed local domain and $M$ is a finitely generated torsion-free $R$-module, then $M$ is free if and only if $M\otimes_R\Hom(M,R)$ is torsion-free. This follows from either \cite[3.3]{Auslander} or from the structure theorem for modules over a principle ideal domains.  C. Huneke and R. Wiegand have conjectured that this property holds for all one-dimensional Gorenstein domains.
\begin{conj} \cite[473--474]{Huneke}
Let $R$ be a one-dimensional Gorenstein domain and let $M$ be a non-zero finitely generated $R$-module, which is not projective.  Then $M\otimes_R\Hom_R(M,R)$ has a non-trivial torsion submodule.
\end{conj}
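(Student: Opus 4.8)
The plan is to settle the two cases announced above; the full conjecture is not in reach, but over a numerical semigroup ring the question becomes purely combinatorial, and this combinatorial reformulation is the number-theoretic analogue alluded to in the introduction. Write $R=k[[S]]=k[[t^s:s\in S]]$ for a numerical semigroup $S$. A monomial ideal of $R$ corresponds to a relative ideal $E$ of $S$ (a subset of $\mathbf{Z}$, bounded below, with $E+S\subseteq E$), its isomorphism class depending only on $E$ up to translation; such an $I$ is free exactly when $E$ is principal, i.e.\ $E=e+S$. Since $R$ is a domain, $\Hom_R(I,R)$ is again monomial and corresponds to $S-E:=\{z\in\mathbf{Z}:z+E\subseteq S\}$, while $II^{-1}$ corresponds to $E+(S-E)$. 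Because $I\otimes_R I^{-1}$ has rank one, its torsion submodule is precisely the kernel of the multiplication map $\mu\colon I\otimes_R I^{-1}\to II^{-1}$, and a direct check of the defining relations of the tensor product shows that $\dim_k (I\otimes_R I^{-1})_n$ is the number of equivalence classes of pairs $(e,d)\in E\times(S-E)$ with $e+d=n$, under the relation generated by $(e,d)\sim(e-s,d+s)$ for $s\in S$ with $e-s\in E$. Hence $I\otimes_R I^{-1}$ is torsion-free if and only if in every degree all such pairs are equivalent, and the conjecture reduces to: \emph{if $E$ is not principal, then some degree carries at least two inequivalent pairs.}

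For \emph{free} numerical semigroups I would induct along the defining tower $S_1\subset S_2\subset\cdots\subset S_h=S$, each stage obtained by gluing the previous one to a copy of $\mathbf{N}$; at the bottom $k[[S_1]]$ is a discrete valuation ring, where the statement is classical. In the inductive step one pulls a relative ideal $E$ of $S_{i+1}$, together with $S_{i+1}-E$ and the relation $\sim$, back through the gluing. The key is that the gluing presentation of $R$ is a complete intersection, so the syzygies of $R$—hence the admissible moves $(e,d)\sim(e-s,d+s)$—are controlled by the obvious binomial relations; non-principality of $E$ then descends to non-principality of an associated relative ideal one level down, which by induction supplies the required witness degree.

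For \emph{two-generated} ideals over an arbitrary complete intersection $S$, write $E=(e_1+S)\cup(e_2+S)$ with $e_1<e_2$, so non-principality says $e_2-e_1\notin S$. Here $S-E=(S-e_1)\cap(S-e_2)$, and the minimal generators of $E$ are exactly $e_1$ and $e_2$, so every pair reduces by $\sim$ to one with first coordinate $e_1$ or $e_2$; consequently a degree $n$ carries two classes precisely when $n-e_1$ and $n-e_2$ both lie in $S-E$ while $(e_1,n-e_1)\not\sim(e_2,n-e_2)$. I would exhibit such an $n$ explicitly from $e_1$, $e_2$ and the conductor (or a pseudo-Frobenius element) of $S$: any chain of moves linking $(e_1,\,\cdot\,)$ to $(e_2,\,\cdot\,)$ forces some $s\in S$ with $s-(e_2-e_1)\in S$ to fall inside a narrow window determined by $n$, and choosing $n$ near the conductor makes this impossible, yielding inequivalence and hence torsion.

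The main obstacle is exactly this existence/inequivalence step. Tensor products can be ``accidentally'' torsion-free, so one must genuinely rule out \emph{every} chain of moves, and in a general numerical semigroup the relation $\sim$ is governed by the relation module of $R$, which can be large and of high degree. The complete intersection hypothesis is what tames it—through the gluing recursion in the free case, and through the Koszul complex on the defining regular sequence of binomials in general—so the real work is to turn ``$R$ is a complete intersection'' into the combinatorial assertion that, at a well-chosen degree, the moves emanating from $(e_i,\,\cdot\,)$ never connect $e_1$ to $e_2$.
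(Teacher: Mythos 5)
This statement is the Huneke--Wiegand Conjecture itself, which the paper never proves: it is quoted as an open conjecture, and the paper only establishes special cases (monomial ideals over free numerical semigroup rings, and two-generated monomial ideals over complete intersection numerical semigroup rings). Your proposal concedes the same restriction, so at best it should be judged as a sketch of those special cases. Your opening reduction is sound and is exactly the paper's starting point: the torsion of $I_A\otimes I_{A^*}$ is detected combinatorially (this is \cite[Theorem 1.4]{Leamer}, quoted as Theorem \ref{thm:PQ} and packaged in the ``Huneke--Wiegand relative ideal'' condition). But beyond that the two key steps you leave to hope are precisely where the paper's real work lies, and one of them as you state it is wrong. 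In the free case you claim that ``non-principality of $E$ descends to non-principality of an associated relative ideal one level down.'' A relative ideal of the gluing $a_1\Gamma_1+a_2\mathbb N$ descends to $\Gamma_1$ only when all of its minimal generators are divisible by $a_1$ (Remark \ref{rem:mult}); a general relative ideal of the glued semigroup is not an extension from $\Gamma_1$ at all, so there is nothing to induct on. The paper must therefore supply a separate argument for the non-descending case: Lemma \ref{lem:nmid}, which uses symmetry of $\Gamma$ and the inclusion $\mathrm{Ap}(\Gamma,a_2)\subset a_1\mathbb N$ to exhibit the explicit element $F(\Gamma)+a_2$ lying in $(P+A^*)\cap(Q+A^*)$ but not in $(P\cap Q)+A^*$. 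Appealing to ``syzygies controlled by the binomial relations'' does not substitute for this step.

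In the two-generated case your plan of ``choosing $n$ near the conductor'' only succeeds when the step size $s$ avoids $\Delta(\mathrm{Ap}(\Gamma,a))$ for some nonzero $a\in\Gamma$: that is exactly Lemma \ref{shift-apery}, where the irreducible sequence starts at $F(\Gamma)+a-s$. The genuinely difficult case --- and the one your proposal explicitly labels ``the main obstacle'' and leaves open --- is $s\in\Delta(\mathrm{Ap}(\Gamma,a_1a_2))$ with $s$ not a multiple of $a_1$ or $a_2$. There the paper does not argue near the conductor in any simple way; it proves a structural factorization result for two-step sequences whose terms lie in $\mathrm{Ap}(\Gamma,a_1a_2)$ under gluing (Lemma \ref{Apfactor}, resting on the decomposition $\mathrm{Ap}(\Gamma,a_1a_2)=a_1\mathrm{Ap}(\Gamma_1,a_2)+a_2\mathrm{Ap}(\Gamma_2,a_1)$), and then runs a delicate case analysis on minimally chosen $w_1,w_2$ in Proposition \ref{PropGlue}, combined with induction through Delorme's characterization of complete intersections as iterated gluings (Theorem \ref{hw-gluing}, Corollary \ref{complete-intersection}). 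So the gap is not a technicality: the existence/inequivalence step you defer is the theorem, and the descent mechanism you propose for the free case breaks down for exactly the ideals that force the paper to introduce Lemma \ref{lem:nmid}.
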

The Huneke-Wiegand Conjecture is often given in another more general form as in \cite{ct}.  However, in \cite[Proposition 5.6]{ct} these two versions are shown to be equivalent.

We show that if $\Gamma$ is a free numerical semigroup in the sense of Bertin and Carbonne \cite{bc},
then monomial ideals of $k[\Gamma]$ satisfy the Huneke-Wiegand Conjecture. We also show that if $k[\Gamma]$ is a complete intersection numerical semigroup ring, then two-generated monomial ideals of $\Gamma$ satisfy the Huneke-Wiegand Conjecture. In order to prove this, we make extensive use of the concept of gluing that was introduced by Rosales (see for instance \cite[Chapter 8]{ns-book}) and inspired by Delorme in \cite{Delorme}.

In the process of proving our main results we enrich the theory of gluing numerical semigroups by showing that extensions of relative ideals behave well with respect to gluing. We also show that for every complete intersection numerical semigroup $\Gamma$ and every $s$ in $\mathbb{N}\setminus\Gamma$ there is an arithmetic sequence $(x,x+s,x+2s)$ with entries in $\Gamma$ which does not factor as the sum of two shorter arithmetic sequences with entries in $\Gamma$.  More generally we will show that this property for a symmetric numerical semigroup $\Gamma$ is inherited through gluing.

\section{fundamentals}
Let $\mathbb Z$ denote the set of integers, and $\mathbb N$ denote the set of non-negative integers. Given $X,Y\subseteq \mathbb{Z}$, write will write $X+Y$ for the set $\{x+y ~|~x \in X,\ y\in Y\}$. If $X=\{x\}$, we will also write $x+Y$ for $\{x\}+Y$. 
A \emph{numerical semigroup} $\Gamma$ is a subset of $\mathbb N$ that contains $0$, is closed under addition and satisfies $\gcd(\Gamma)=1$, where $\gcd$ stands for greatest common divisor. The condition that $\gcd(\Gamma)=1$ is equivalent to saying that the set $\mathbb{N}\setminus\Gamma$ is finite. Let $k$ be a field, and let $t$ be an indeterminate. The ring $k[\Gamma]:=\bigoplus_{n\in \Gamma} kt^n$ is called the \emph{semigroup ring} associated to $\Gamma$.


The elements in $\mathbb N\setminus \Gamma$ are called \emph{gaps} of $\Gamma$, and the cardinality of $\mathbb{N}\setminus\Gamma$ is known as the \emph{genus} of $\Gamma$, denoted by $\mathrm g(\Gamma)$. The largest integer not in $\Gamma$ is its \emph{Frobenius number}, $\mathrm F(\Gamma)$. If $x$ is an element of  $\Gamma$, then $\mathrm F(\Gamma)-x$ is never in $\Gamma$. From this fact it easily follows that $\mathrm g(\Gamma)\ge \frac{\mathrm F(\Gamma)+1}2$. If the equality holds, then we say that $\Gamma$ is \emph{symmetric}.
It can also be shown that $\Gamma$ is symmetric if and only if $\Gamma=\{x\in\mathbb{Z}|\ F(\Gamma)-x\notin\Gamma\}$; see \cite[Chapter 3]{ns-book} for this and other properties of symmetric numerical semigroups.

We say that $X\subseteq \Gamma$ generates $\Gamma$ if $\Gamma =\langle X\rangle :=\sum_{x\in X} x\mathbb N$. Every numerical semigroup has a unique finite minimal generating set,
and its cardinality is known as the \emph{embedding dimension} of $\Gamma$. For a more thorough introduction to numerical semigroups see \cite[Chapter 1]{ns-book}. 

A set $A$ of integers is said to be a \emph{relative ideal} of $\Gamma$ if $A+\Gamma\subseteq A$, and there exists an integer $x$ such that $x+A\subseteq \Gamma$. For every relative ideal $A$, there exist $x_1,\ldots, x_n$ in $A$ such that $A=\{x_1,\ldots,x_n\}+\Gamma=\bigcup_{i=1}^n (x_i +\Gamma)$. In this case, we will write $A=(x_1,\ldots, x_n)$, and we will say that $X=\{x_1,\ldots,x_n\}$ is a generating set for $A$. If no proper subset  of $X$ generates $A$, then we will refer to $X$ as the minimal generating set of $A$. The minimal generating set of a given relative ideal is necessarily unique.

If $A$ and $B$ are relative ideals of $\Gamma$ generated respectively by $X$ and $Y$, then we have the following:
\begin{itemize}
\item $A+B$ is a relative ideal of $\Gamma$ and $A+B=( x+y ~|~ x\in X,\ y\in Y)$;
\item $A\cup B$ is a relative ideal of $\Gamma$ and $A\cup B=(X\cup Y)$;
\item $A\cap B$ is also a relative ideal of $\Gamma$;
\item $A-_{\mathbb Z} B=\{ z\in \mathbb Z~|~ z+B\subseteq A\}$ is also a relative ideal of $\Gamma$.
\end{itemize}

In particular we will write 
\[ A^* = \Gamma -_\mathbb Z A = \{z\in \mathbb Z~|~ z+A\subseteq  \Gamma\}.\]

\begin{disc} \label{relations}
Let $\Gamma$ be a numerical semigroup. Let $A$, $B$ and $C$ be relative ideals of $\Gamma$, and let $x$ be an integer. Then the following relations hold:
\begin{enumerate}
\item \label{relations1} $A-_{\mathbb Z}(B\cup C)=(A-_{\mathbb Z} B)\cap (A-_{\mathbb Z} C)$; 
\item \label{relations2} $(A\cap B)-_{\mathbb Z}C=(A-_{\mathbb Z}C)\cap(B-_{\mathbb{Z}}C)$;
\item \label{relations3} $A-(x)=-x+A$;
\item \label{relations4} $(A\cup B)^*=A^*\cap B^*$; and
\item \label{relations5} $(x+A)^*=-x+A^*$, in particular, $(x)^*=(-x)$.
\end{enumerate} 

\eqref{relations1}: Suppose $x$ is in $(A-_{\mathbb Z} B)\cap (A-_{\mathbb Z} C)$. Then $x+B\subseteq A$ and $x+C\subseteq A$, so $x+(B\cup C)\subseteq A$; hence $x$ is in $A-_{\mathbb Z}(B\cup C)$. Now suppose $x$ is in $A-_{\mathbb Z}(B\cup C)$. Then $x+(B\cup C)\subseteq A$, so $x+B\subseteq A$ and $x+C\subseteq A$; hence $x$ is in $(A-_{\mathbb Z} B)\cap (A-_{\mathbb Z} C)$. 

\eqref{relations3}:  By shifting, $y$ is in $(-x+A)$ if and only of $x+y$ is in $A$.

The proof of \eqref{relations2} is similar to the proof of \eqref{relations1}. \eqref{relations4} is a special case of \eqref{relations1}. Also \eqref{relations5} is a special case of \eqref{relations3}.
\end{disc}
 
Let $\Gamma$ be a numerical semigroup. Let $A$ be a relative ideal of  $\Gamma$. 
We will say that a relative ideal $A$ is \emph{Huneke-Wiegand} if it is principal or if there exist relative ideals $P$ and $Q$ such that $P\cup Q=A$ and 
\[
((P+A^*)\cap (Q+A^*))\neq((P\cap Q)+A^*).
\]
We say that $\Gamma$ is a \emph{Huneke-Wiegand} numerical semigroup if every relative ideal of $\Gamma$ is Huneke-Wiegand.

\begin{disc} \label{rem:gen}
Let $\Gamma$ be a numerical semigroup and $A=(x_1,\hdots,x_n)$ a relative ideal of $\Gamma$. It follows from 
the equivalences in \cite[Theorem 1.4]{Leamer} that $A$ is Huneke-Wiegand if and only if there exists a partition $\{S,S'\}$ of 
$\{1,\hdots,n\}$ such that for $P=(x_i|\ i\in S)$, $Q=(x_j|\ j\in S')$ and   
$(P+A^*)\cap(Q+A^*)\neq(P\cap Q)+A^*$.
\end{disc}

\begin{disc}\label{shifting}
If $A$ is Huneke-Wiegand, then so is $x+A$, for any $x\in \mathbb Z$. 
\end{disc}

\begin{ex}
Let $\Gamma=\mathbb N$. Then every relative ideal is principal, that is, of the form $x+\mathbb N$. Thus $\mathbb{N}$ is a Huneke-Wiegand numerical semigroup. 
\end{ex}

 Given a bounded below set $S$ of integers, define $I_S$ to be the fractional ideal of $k[\Gamma]$ generated by $\{t^n~|~ n\in S\}$. 

\begin{thm}\cite[Theorem 1.4]{Leamer}\label{thm:PQ}
Let $\Gamma$ be a numerical semigroup, and let $A$ be a non-principal relative ideal of $\Gamma$. Then $I_A\otimes_{k[\Gamma]} I_{A^*}$ has a non-trivial torsion submodule if and only if $A$ is Huneke-Wiegand.
\end{thm}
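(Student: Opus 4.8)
\emph{Proof sketch proposal.} The plan is to pass to the natural $\mathbb{Z}$-grading on $I_A\otimes_{k[\Gamma]}I_{A^*}$ and reduce the question to the injectivity of a multiplication map. Since $A+A^*\subseteq\Gamma$, multiplication induces a $k[\Gamma]$-linear map $\mu\colon I_A\otimes_{k[\Gamma]}I_{A^*}\to k[\Gamma]$ whose image is the fractional ideal $I_{A+A^*}$, which is torsion-free. Both $I_A$ and $I_{A^*}$ have rank one over the domain $k[\Gamma]$, so $I_A\otimes_{k[\Gamma]}I_{A^*}$ has rank one; hence $\ker\mu$ has rank zero and is torsion, while conversely the torsion submodule maps to zero inside the torsion-free module $I_{A+A^*}$ and so is contained in $\ker\mu$. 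Thus $\ker\mu$ is exactly the torsion submodule, and it suffices to determine when $\mu$ fails to be injective.

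Next I would write $I_A=\bigoplus_{a\in A}kt^a$ and $I_{A^*}=\bigoplus_{b\in A^*}kt^b$ and use the standard description of $\otimes_{k[\Gamma]}$ relative to $\otimes_k$: the $k$-space $I_A\otimes_{k[\Gamma]}I_{A^*}$ is the quotient of $\bigoplus_{a\in A,\,b\in A^*}k\,(t^a\otimes t^b)$ by the relations $t^{a+g}\otimes t^b=t^a\otimes t^{b+g}$ for $g\in\Gamma$, and this quotient is graded by $d=a+b$. For fixed $d$, introduce the graph $G_d$ with vertex set $\{a\in A\mid d-a\in A^*\}$ in which $a$ and $a'$ are adjacent whenever $a-a'\in\Gamma$ or $a'-a\in\Gamma$; the relations identify $(a,d-a)$ with $(a',d-a')$ precisely when $a$ and $a'$ lie in the same connected component of $G_d$, so $\dim_k(I_A\otimes_{k[\Gamma]}I_{A^*})_d$ equals the number of components of $G_d$. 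As $(I_{A+A^*})_d$ is one-dimensional when $G_d\neq\emptyset$ (that is, $d\in A+A^*$) and zero otherwise, $\mu$ is non-injective if and only if $G_d$ is disconnected for some $d$.

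It then remains to identify ``$G_d$ disconnected for some $d$'' with $A$ being Huneke-Wiegand. If $G_d$ has at least two components, I would use that every vertex can be written $x+g$ with $x$ a minimal generator of $A$ and $g\in\Gamma$, and that such an $x$ is itself a vertex in the same component; splitting the minimal generators of $A$ according to the component they occupy, and throwing the remaining minimal generators into the second block, yields relative ideals $P$ and $Q$ with $P\cup Q=A$, and one checks $d\in((P+A^*)\cap(Q+A^*))\setminus((P\cap Q)+A^*)$ — the point being that an element $r\in P\cap Q$ with $d-r\in A^*$ would be a vertex adjacent to generators in two different blocks. For the converse, given relative ideals $P,Q$ with $P\cup Q=A$ and some $d\in((P+A^*)\cap(Q+A^*))\setminus((P\cap Q)+A^*)$, write $d=p+y=q+y'$; then $p\in P$ and $q\in Q$ are vertices of $G_d$, and along any path between them some step would have to pass from a vertex in $P\setminus Q$ to one in $Q\setminus P$ — impossible, since adjacent vertices differ by an element of $\Gamma$ and $P,Q$ are stable under adding $\Gamma$ — unless some vertex of the path lies in $P\cap Q$, which would force $d\in(P\cap Q)+A^*$, a contradiction. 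Hence $G_d$ is disconnected.

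The step I expect to be the main obstacle is the one in the second paragraph: verifying that the only relations among the generators $t^a\otimes t^b$ are the evident ones $t^{a+g}\otimes t^b=t^a\otimes t^{b+g}$, and that these cut each graded piece down to exactly the connected components of $G_d$. Once the graph $G_d$ is correctly set up, both implications in the last step reduce to bookkeeping about which minimal generators lie below which vertices, with the key leverage being that edges of $G_d$ are translations by elements of $\Gamma$ and relative ideals are closed under such translations.
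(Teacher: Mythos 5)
The paper does not actually prove Theorem~\ref{thm:PQ}: it is quoted verbatim from \cite[Theorem 1.4]{Leamer}, and the notion of a Huneke-Wiegand ideal is set up here precisely so that the cited equivalence applies; so there is no internal proof to compare against, and your argument should be judged as a stand-alone proof --- which it is, and a correct one. The step you flag as the main obstacle is fine: $I_A\otimes_{k[\Gamma]}I_{A^*}$ is the quotient of $I_A\otimes_k I_{A^*}$ by the $k$-span of the elements $fr\otimes h-f\otimes rh$, and expanding $f,r,h$ into monomials shows this span equals the span of the homogeneous relations $t^{a+g}\otimes t^b-t^a\otimes t^{b+g}$ with $a\in A$, $b\in A^*$, $g\in\Gamma$; in a fixed degree $d$ these are exactly the differences of basis vectors indexed by adjacent vertices of your graph $G_d$, so the degree-$d$ piece has dimension equal to the number of connected components, and the multiplication map (sending every class to $t^d\neq 0$) is injective in degree $d$ if and only if $G_d$ has at most one component. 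Together with your rank-one identification of $\ker\mu$ with the torsion submodule, this reduces the theorem to the combinatorial equivalence, and both implications there go through: every vertex $a=x+g$ lies in the same component as the minimal generator $x$, since $d-x=(d-a)+g\in A^*$, so a disconnected $G_d$ makes both blocks of minimal generators nonempty and gives $d\in\bigl((P+A^*)\cap(Q+A^*)\bigr)\setminus\bigl((P\cap Q)+A^*\bigr)$; conversely, for a witness $d$ the elements $p,q$ are distinct vertices (coincidence would place them in $P\cap Q$), no vertex of a connecting path may lie in $P\cap Q$, and no edge can join $P\setminus Q$ to $Q\setminus P$ because relative ideals absorb $\Gamma$. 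What your route buys is an elementary, purely graded and self-contained proof of the imported result, in the paper's own notation; it also recovers, via your construction of the two blocks of minimal generators, the refinement recorded in Remark~\ref{rem:gen} that $P$ and $Q$ may be taken to be generated by a partition of the minimal generating set.
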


\begin{cor}
Let $\Gamma$ be a numerical semigroup. Then $k[\Gamma]$ fulfills the Huneke-Wiegand Conjecture for monomial ideals if and only if $\Gamma$ is Huneke-Wiegand. 
\end{cor}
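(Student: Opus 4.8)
The plan is to unwind both sides of the equivalence and match them through Theorem~\ref{thm:PQ}, which is the only input with real content. First I would fix the dictionary between monomial ideals of $k[\Gamma]$ and relative ideals of $\Gamma$: a non-zero monomial ideal $I$ (an ideal generated by monomials $t^n$) is precisely $I_A$ for the relative ideal $A=\{n\in\mathbb N\mid t^n\in I\}$, and $A\subseteq\Gamma$; this gives a bijection between non-zero monomial ideals of $k[\Gamma]$ and non-empty relative ideals of $\Gamma$ that are contained in $\Gamma$. Since, by the definition of a relative ideal, every relative ideal of $\Gamma$ has some shift contained in $\Gamma$, Remark~\ref{shifting} shows that $\Gamma$ is Huneke-Wiegand if and only if every relative ideal contained in $\Gamma$ is Huneke-Wiegand. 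So it suffices, for each non-zero monomial ideal $I=I_A$ with $A\subseteq\Gamma$, to relate ``$A$ is Huneke-Wiegand'' to the conclusion of the Huneke-Wiegand Conjecture for the module $I$.

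Next I would make the two identifications needed to line this up with Theorem~\ref{thm:PQ}. Working inside the fraction field $k(t)$, one has $\Hom_{k[\Gamma]}(I_A,k[\Gamma])\cong (k[\Gamma]:_{k(t)}I_A)$, and I would compute this colon module directly: if $q\in k(t)$ satisfies $qt^{x_i}\in k[\Gamma]$ for each element $x_i$ of a generating set of $A$, then $q\in\bigcap_i t^{-x_i}k[t]$, so $q$ is a Laurent polynomial, and comparing supports forces every exponent $z$ occurring in $q$ to satisfy $z+A\subseteq\Gamma$; hence $(k[\Gamma]:_{k(t)}I_A)=I_{A^*}$ with $A^*=\Gamma-_{\mathbb Z}A$ as in the excerpt, and therefore $I\otimes_{k[\Gamma]}\Hom_{k[\Gamma]}(I,k[\Gamma])\cong I_A\otimes_{k[\Gamma]}I_{A^*}$. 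I would also record that for a monomial ideal, ``not projective'' matches ``$A$ not principal'': one direction is clear, and for the other, an invertible $I_A$ is locally free of rank one at the homogeneous maximal ideal $\mathfrak m$, where the images of a minimal monomial generating set of $I_A$ form a $k$-basis of $I_A/\mathfrak m I_A$, so that set must be a singleton.

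With the dictionary and these identifications in place the corollary is a chain of equivalences: $k[\Gamma]$ fulfills the Huneke-Wiegand Conjecture for monomial ideals $\iff$ for every non-principal relative ideal $A\subseteq\Gamma$ the module $I_A\otimes_{k[\Gamma]}I_{A^*}$ has non-trivial torsion $\iff$ (Theorem~\ref{thm:PQ}) every non-principal relative ideal $A\subseteq\Gamma$ is Huneke-Wiegand $\iff$ (principal relative ideals being Huneke-Wiegand by definition) every relative ideal $A\subseteq\Gamma$ is Huneke-Wiegand $\iff$ (the reduction of the first paragraph) $\Gamma$ is Huneke-Wiegand. The only place with any substance is the identification $\Hom_{k[\Gamma]}(I_A,k[\Gamma])\cong I_{A^*}$ together with the projective-versus-principal remark; I expect the rest, and in particular handling the shift via Remark~\ref{shifting}, to be routine bookkeeping once the notation is set up, with Theorem~\ref{thm:PQ} doing the actual work.
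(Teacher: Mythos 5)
Your proposal is correct and follows essentially the same route as the paper: identify $\Hom_{k[\Gamma]}(I_A,k[\Gamma])$ with $I_{A^*}$, use the bijection between relative ideals and (fractional) monomial ideals, and let Theorem~\ref{thm:PQ} do the work. The paper simply cites \cite[Remark 1.7]{Leamer} for the Hom identification and leaves the shifting and projective-versus-principal bookkeeping implicit, whereas you verify these details directly.
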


\begin{proof}
Let $A$ be a reltive ideal of $\Gamma$. We always have the isomorphism $I_{A^*}\cong \Hom_{k[\Gamma]}(I_A,k[\Gamma])$ given by sending $x$ to multiplication by $x$; see \cite[Remark 1.7]{Leamer}.  Since the map 
$[A\mapsto I_A]$ establishes a bijection between relative ideals of $\Gamma$ and fractional monomial ideals of $k[\Gamma]$, the result follows. 
\end{proof}

\section{Gluings of numerical semigroups}

Let $\Gamma$, $\Gamma_1$ and $\Gamma_2$ be numerical semigroups. We say that $\Gamma$ is the gluing of $\Gamma_1$ and $\Gamma_2$ if there exist $a_1$ in $\Gamma_2$ and $a_2$ in $\Gamma_1$ such that $\Gamma=a_1\Gamma_1+a_2\Gamma_2$. Since $\mathbb{N}\setminus\Gamma$ is finite, we must have $\gcd(a_1,a_2)=1$. 

Let $\Gamma=a_1\Gamma_1+a_2\Gamma_2$ be a gluing of two numerical semigroups $\Gamma_1$ and $\Gamma_2$. Let $A_1$ and $A_2$ be relative ideals of $\Gamma_1$ and $\Gamma_2$ respectively. Then the set $A=a_1A_1+a_2A_2$ is a relative ideal of $\Gamma$. In this case we will say that $A$ is the \emph{extension} of $A_1$ and $A_2$ to $\Gamma$. Our first goal will be to prove that if a non-principal ideal $A_1$ is Huneke-Wiegand, then so is any extension $A=a_1A_1+a_2A_2$. Also we will show that the reverse implication holds when $A_2=\Gamma_2$. Before establishing these results we will prove some elementary properties of extensions.   

Ap\'ery sets are often used in the study of numerical semigroups; see \cite{ns-book}. For a given a nonempty set of integers $S$ and $z$ a nonzero integer, the Ap\'ery set of $S$ with respect to $z$ is defined as 
\[\mathrm{Ap}(S,z)=\{ s\in S~|~ s-z\not\in S\}.\]

\begin{lem} \label{lem:mult}
Let $\Gamma_1$ and $\Gamma_2$ be numerical semigroups and $\Gamma=a_1\Gamma_1+a_2\Gamma_2$ be a gluing of 
$\Gamma_1$ and $\Gamma_2$.  Let $A=(x_1,\ldots, x_n)$ and $B=(y_1,\ldots,y_m)$ be relative ideals of $\Gamma_1$, and let $C=(z_1,\hdots ,z_h)$ and $D=(w_1,\hdots,w_{\ell})$ be a relative ideals of $\Gamma_2$. Then we have the following:
\begin{enumerate}
\item \label{lem:mult1} $a_1A+a_2C=(a_1x_i+a_2z_j|\ i=1,\hdots,n$ and $j=1,\hdots,h)$;
\item \label{lem:mult2} $a_1A+a_2C=a_1A+a_2\mathrm{Ap}(C,a_1)$;
\item \label{lem:mult3} $a_1(A\cup B)+a_2C=(a_1A+a_2C)\cup(a_1 B +a_2C)$;  
\item \label{lem:mult4} $a_1(A+B)+a_2C=(a_1A+a_2C)+(a_1B+a_2\Gamma_2)$;
\item \label{lem:mult5} $(a_1A+a_2C)\cap a_1\mathbb{Z}=a_1A+a_2\min\{za_1\in C|\ z\in\mathbb{Z}\}$;
\item \label{lem:mult6} $a_1A+a_2C=a_1B+a_2C$ if and only if $A=B$;
\item \label{lem:mult7} $a_1(A\cap B)+a_2C=(a_1A+a_2C)\cap(a_1 B +a_2C)$; 
\item \label{lem:mult8} $(a_1A+a_2C)-_{\mathbb{Z}}(a_1B+a_2D)=a_1(A-_{\mathbb Z}B)+a_2(C-_{\mathbb Z}D)$; and
\item \label{lem:mult9} $(a_1A+a_2C)^*=a_1(A^*)+a_2(C^*)$.
\end{enumerate}
\end{lem}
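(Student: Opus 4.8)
The plan is to prove the nine identities in roughly the order listed, since the later ones lean on the earlier ones, and to reduce everything to the key structural fact about gluings: every element of $\Gamma = a_1\Gamma_1 + a_2\Gamma_2$ has a \emph{unique} expression as $a_1\gamma_1 + a_2\gamma_2$ with $\gamma_1 \in \Gamma_1$ and $\gamma_2 \in \mathrm{Ap}(\Gamma_2, a_1)$ (equivalently $\gamma_2 < a_1$ does not capture it since $a_1,a_2$ need not be small, but the Apéry description does), and more generally that $a_1 n_1 + a_2 n_2 = a_1 m_1 + a_2 m_2$ with $n_i, m_i \in \mathbb{Z}$ forces $n_1 - m_1 \in a_2\mathbb{Z}$ and $n_2 - m_2 \in a_1\mathbb{Z}$, because $\gcd(a_1,a_2)=1$. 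This congruence bookkeeping is what makes all the set-theoretic manipulations go through.

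First I would dispatch \eqref{lem:mult1}: expanding $a_1 A + a_2 C$ using $A = \bigcup_i (x_i + \Gamma_1)$, $C = \bigcup_j (z_j + \Gamma_2)$, and $\Gamma = a_1\Gamma_1 + a_2\Gamma_2$ gives immediately that $a_1 A + a_2 C = \bigcup_{i,j} (a_1 x_i + a_2 z_j + \Gamma)$, which is the claimed generating set (minimality is not asserted). Then \eqref{lem:mult2} follows from the defining property of the Apéry set: $C = \mathrm{Ap}(C, a_1) + a_1\mathbb{N}$ as relative ideals of $\Gamma_2$ — wait, one must be slightly careful, as $C$ is an ideal of $\Gamma_2$ and $a_1 \in \Gamma_2$, so $C + a_1\mathbb{N} = C$ and $\mathrm{Ap}(C,a_1)$ generates $C$ over $a_1\mathbb{N}$ in the sense that $C = \bigcup_{c \in \mathrm{Ap}(C,a_1)}(c + a_1\mathbb{N})$; then $a_2 C = a_2\mathrm{Ap}(C,a_1) + a_1 a_2 \mathbb{N} \subseteq a_2\mathrm{Ap}(C,a_1) + a_1\Gamma_1$ (since $a_2 \in \Gamma_1$), whence $a_1 A + a_2 C = a_1 A + a_2 \mathrm{Ap}(C, a_1)$. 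Identities \eqref{lem:mult3}, \eqref{lem:mult4}, \eqref{lem:mult5} are then direct computations with generating sets via \eqref{lem:mult1} and the bullet-point rules for $+, \cup$ on relative ideals recalled before Remark~\ref{relations}; for \eqref{lem:mult5} one intersects with $a_1\mathbb{Z}$ and uses the uniqueness congruence to see that only the $a_2$-part that is a multiple of $a_1$ survives.

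The substantive steps are \eqref{lem:mult6}, \eqref{lem:mult7}, \eqref{lem:mult8}, \eqref{lem:mult9}, and I expect \eqref{lem:mult8} to be the main obstacle. For \eqref{lem:mult6}: the ``if'' direction is trivial; for ``only if'', if $a_1 A + a_2 C = a_1 B + a_2 C$ then for each generator $x_i$ of $A$ and each $z_j$ of $C$ the element $a_1 x_i + a_2 z_j$ lies in $a_1 B + a_2 C$, so equals $a_1 b + a_2 c + a_1\gamma_1 + a_2\gamma_2$ for some $b \in B$, $c \in C$, $\gamma_i \in \Gamma_i$; comparing via the uniqueness congruence modulo $a_2$ forces $x_i \equiv b + \gamma_1 \pmod{a_2}$, but one needs $x_i$ itself to be in $B$, not merely congruent — here I would fix $j$ so that $z_j$ is $a_1$-minimal in its congruence class (or pass to $\mathrm{Ap}$) and push the $a_2$-component comparison to conclude $x_i \in B + \Gamma_1 = B$; symmetry gives $A = B$. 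For \eqref{lem:mult7}, the inclusion $\subseteq$ is the content: an element in both $a_1 A + a_2 C$ and $a_1 B + a_2 C$ gets two expressions; using \eqref{lem:mult2} I may assume the $C$-components lie in $\mathrm{Ap}(C,a_1)$, and the uniqueness congruence modulo $a_1$ then forces the two $C$-components equal, reducing to $a_1 A \cap a_1 B = a_1(A \cap B)$, which is clear. For \eqref{lem:mult8}: the inclusion $\supseteq$ is straightforward from $(A-_{\mathbb{Z}}B) + B \subseteq A$. For $\subseteq$, take $z$ with $z + (a_1 B + a_2 D) \subseteq a_1 A + a_2 C$; first show $z \in a_1\mathbb{Z} + a_2\mathbb{Z} = \mathbb{Z}$ is no constraint, but testing $z$ against generators $a_1 y_i + a_2 w_j$ and using the uniqueness congruences I would argue that $z$ must itself decompose as $a_1 u + a_2 v$ with $u \in \mathbb{Z}$, $v \in \mathbb{Z}$, and that necessarily $u + B \subseteq A$ and $v + D \subseteq C$; the delicate point is that a priori a single $z$ could ``mix'' contributions across congruence classes, and ruling this out is exactly where one must use that $D$ (resp.\ $B$) meets every residue class it needs to, together with finiteness of the gaps. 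Finally \eqref{lem:mult9} is the special case $B = \Gamma_1$, $D = \Gamma_2$ of \eqref{lem:mult8}, since $\Gamma_1 -_{\mathbb{Z}} \Gamma_1 = \Gamma_1$ is not quite right — rather $A^* = \Gamma_1 -_{\mathbb{Z}} A$ and $a_1\Gamma_1 + a_2\Gamma_2 = \Gamma$, so $(a_1 A + a_2 C)^* = \Gamma -_{\mathbb{Z}}(a_1 A + a_2 C) = (a_1\Gamma_1 + a_2\Gamma_2) -_{\mathbb{Z}}(a_1 A + a_2 C) = a_1(\Gamma_1 -_{\mathbb{Z}} A) + a_2(\Gamma_2 -_{\mathbb{Z}} C) = a_1 A^* + a_2 C^*$, directly by \eqref{lem:mult8}.

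Throughout, the recurring technical device is: whenever two sums $a_1 n_1 + a_2 n_2$ and $a_1 m_1 + a_2 m_2$ are compared, reduce the $\Gamma_2$-components into $\mathrm{Ap}(\Gamma_2, a_1)$ (or the relevant $\mathrm{Ap}(C, a_1)$) using \eqref{lem:mult2}, then invoke $\gcd(a_1, a_2) = 1$ to force the components equal on the nose rather than merely congruent. I would state this uniqueness-of-representation observation once as a preliminary sub-claim and cite it repeatedly, so that \eqref{lem:mult1}--\eqref{lem:mult9} each become a short paragraph. The only place where more than congruence bookkeeping is needed is the $\subseteq$ direction of \eqref{lem:mult8}, and I would treat that case last and in the most detail.
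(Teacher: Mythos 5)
Most of your plan matches the paper's proof: \eqref{lem:mult1}--\eqref{lem:mult5} are routine exactly as you say, \eqref{lem:mult7} is proved in the paper by the same device you propose (reduce the $C$-components into $\mathrm{Ap}(C,a_1)$ and use that distinct Ap\'ery elements are incongruent modulo $a_1$), \eqref{lem:mult9} is indeed the special case $B=\Gamma_1$, $D=\Gamma_2$ of \eqref{lem:mult8}, and your pass-to-Ap\'ery fix for \eqref{lem:mult6} works (the paper instead intersects with $a_1\mathbb{Z}$ and quotes \eqref{lem:mult5}); these are minor variations. The genuine gap is the inclusion $\subseteq$ in \eqref{lem:mult8}, precisely the step you flag as ``the main obstacle'' and then leave unresolved. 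Saying that $z$ ``must itself decompose'' as $a_1u+a_2v$ with $u+B\subseteq A$ and $v+D\subseteq C$ is a restatement of the claim, not an argument, and the mechanisms you gesture at --- finiteness of the gaps, $D$ meeting the needed residue classes --- are not what makes it true. Indeed the naive choices fail: if you write $z+a_1y_i+a_2w_j=a_1\alpha_{ij}+a_2c_{ij}$ with $c_{ij}\in\mathrm{Ap}(C,a_1)$ and try $v=\min_j(c_{ij}-w_j)$, then $v+w_j$ is an Ap\'ery element of $C$ minus a positive multiple of $a_1$, hence \emph{not} in $C$, so the decomposition does not close up without a further idea.

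The paper disposes of \eqref{lem:mult8} without any element-chasing: write $B=\bigcup_i(y_i)$ and $D=\bigcup_j(w_j)$, use Remark~\ref{relations}\,\eqref{relations1} and \eqref{relations3} to rewrite $A-_{\mathbb Z}B=\bigcap_i(-y_i+A)$ and $C-_{\mathbb Z}D=\bigcap_j(-w_j+C)$, then apply \eqref{lem:mult7} repeatedly to pull $a_1(\cdot)+a_2(\cdot)$ across these finite intersections, obtaining $\bigcap_{i,j}\bigl((a_1A+a_2C)-_{\mathbb Z}(a_1y_i+a_2w_j)\bigr)$, which reassembles to $(a_1A+a_2C)-_{\mathbb Z}(a_1B+a_2D)$ by Remark~\ref{relations} again. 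So the distributivity \eqref{lem:mult7} you have already proved is exactly the tool that kills the hard direction; no new congruence analysis is needed. If you insist on completing your direct approach instead, the missing idea is specific: for fixed $z$ the Ap\'ery component $c_{ij}$ above is independent of $i$ (all candidates are congruent mod $a_1$ and lie in $\mathrm{Ap}(C,a_1)$), the differences $v_j=c_j-w_j$ are mutually congruent modulo $a_1$, and one must take $v=\max_j v_j$ --- the maximum, so that $v+w_j=c_j+($a nonnegative multiple of $a_1)\in C$ --- after which $u=(z-a_2v)/a_1$ satisfies $u+y_i\in A$ for every $i$. Either supply that argument or, better, adopt the paper's reduction to \eqref{lem:mult7}.
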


\begin{proof}
\eqref{lem:mult1}: This is clear.

\eqref{lem:mult2}: Clearly, $a_1A+a_2\mathrm{Ap}(C,a_1)\subseteq a_1A+a_2C$. Take $x=a_1a+ a_2c$ in $a_1A+a_2C$, with $a$ in $A$ and $c$ in $C$. There exists $d\geq 0$ such that $c-da_1$ is in $\mathrm{Ap}(C,a_1)$. Thus $x=a_1(a+da_2)+a_2(c-da_1)$ is in $a_1A+a_2\mathrm{Ap}(C,a_1)$. 

\eqref{lem:mult3}: This is standard, since unions distribute over setwise addition.

\eqref{lem:mult4}: We have the following equalities:
\[
a_1(A+B)+a_2C=a_1A+a_1B+a_2(C+\Gamma_2)=(a_1A+a_2C)+(a_1B+a_2\Gamma_2).
\]

\eqref{lem:mult5}: We have $a_1A+a_2\min\{za_1\in C|\ z\in\mathbb{Z}\}\subseteq a_1\mathbb{Z}$ and 
\[
a_1A+a_2\min\{za_1\in C|\ z\in\mathbb{Z}\}\subseteq a_1A+a_2C.
\] 
\[
\textrm{Thus}\quad a_1A+a_2\min\{za_1\in C|\ z\in\mathbb{Z}\}\subseteq (a_1A+a_2C)\cap a_1\mathbb{Z}.
\]
Conversely let $x$ be in 
$(a_1A+a_2C)\cap a_1\mathbb{Z}$. Then $x=a_1a + a_2c= a_1 z$ for some integer $z$, $a$ in $A$ and $c$ in $C$.
Since $\gcd(a_1,a_2)=1$, $a_1$ divides $c$; hence $c=da_1$ for some $d$ in $\mathbb{N}$. Let $a_1e=\min\{za_1\in C|\ z\in\mathbb{Z}\}$. Then $d\geq e$ and $x=a_1(a+a_2(d-e))+a_2(a_1e)$ is in $a_1A+a_2\min\{za_1\in C|\ z\in\mathbb{Z}\}$.

\eqref{lem:mult6}: Clearly $A=B$ implies $a_1A+a_2C=a_1B+a_2C$. Assume that  $a_1A+a_2C=a_1B+a_2C$. Using \eqref{lem:mult5} there is an integer $z$ such that 
\[
a_1A+z= (a_1A+a_2C)\cap a_1\mathbb{Z}= (a_1B+a_2C)\cap a_1\mathbb{Z}= a_1B+z.
\]
Therefore $A=B$.

\eqref{lem:mult7}: Since $(A\cap B)\subseteq A$, we have $a_1(A\cap B)+a_2C\subseteq a_1A+a_2C$.  Similarly we have 
$a_1(A\cap B)+a_2C\subseteq a_1B+a_2C$, and consequently 
\[
a_1(A\cap B)+a_2C\subseteq (a_1A+a_2C)\cap (a_1B+a_2C).
\] 

For the other inclusion, let $z$ be in $(a_1A+a_2C)\cap (a_1B+a_2C)$.  By \eqref{lem:mult2} we have $z=a_1a+a_2c=a_1b+a_2c'$ for some $a$ in $A$, $b$ in $B$ and $c, c'$ in $\mathrm{Ap}(C,a_1)$. Since $\gcd(a_1,a_2)=1$, we get that $c$ and $c'$ are congruent modulo $a_1$. However, since they are both in $\mathrm{Ap}(C,a_1)$, we have $c=c'$.  Thus $a=b$ is in $A\cap B$ and $z=a_1a+a_2c$ is in $a_1(A\cap B)+a_2C$.

\eqref{lem:mult8}: The second and eighth equalities in the next sequence are from Remark \ref{relations}~\eqref{relations1}. The fourth and fifth equalities are from \eqref{lem:mult7}. The third and seventh equalities are from Remark~\ref{relations}~\eqref{relations3}. The first and sixth equalities are straight forward, and the last equality below is from \eqref{lem:mult1}. 
\begin{align*}
\textstyle a_1(A-_{\mathbb{Z}}B)+a_2(C-_{\mathbb{Z}}D)& \textstyle=a_1(A-_{\mathbb{Z}}(\bigcup_{i=1}^m(y_i)))+a_2(C-_{\mathbb{Z}}(\bigcup_{j=1}^{\ell}(w_j)))\\
&\textstyle=a_1(\bigcap_{i=1}^m (A-_{\mathbb Z}(y_i)))+a_2(\bigcap_{j=1}^{\ell}(C-_{\mathbb{Z}}(w_j)))\\
&\textstyle =a_1(\bigcap_{i=1}^m(-y_i+A))+a_2(\bigcap_{j=1}^{\ell}(-w_j+C))\\ 
&\textstyle=\bigcap_{i=1}^m(a_1(-y_i+A)+\bigcap_{j=1}^{\ell}a_2(-w_j+C))\\
&\textstyle=\bigcap_{i,j}(a_1(-y_i+A)+a_2(-w_j+C))\\
&\textstyle=\bigcap_{i,j}((-a_1y_i-a_2w_j)+(a_1A+a_2C))\\
&\textstyle=\bigcap_{i,j}((a_1A+a_2C)-_{\mathbb{Z}}(a_1y_i+a_2w_j))\\
&\textstyle=(a_1A+a_2C)-_{\mathbb Z}\bigcup_{i,j}(a_1y_i+a_2w_j)\\
&=(a_1A+a_2C)-_{\mathbb Z}(a_1B+a_2D).
\end{align*}

\eqref{lem:mult9}: This is a special case of \eqref{lem:mult8}. 
\end{proof}

\begin{prop} \label{prop:extension}
Let $\Gamma_1$ and $\Gamma_2$ be numerical semigroups and $\Gamma=a_1\Gamma_1+a_2\Gamma_2$ be a gluing of $\Gamma_1$ and $\Gamma_2$. Let $A$ be a non-principal, Hunke-Wiegand relative ideal of $\Gamma_1$. 
Then $a_1A+a_2B$ is Huneke-Wiegand as a relative ideal of $\Gamma$ for every relative ideal $B$ of $\Gamma_2$.
\end{prop}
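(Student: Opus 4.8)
The plan is to transport a Huneke-Wiegand decomposition of $A$ up to $E:=a_1A+a_2B$ along the gluing. Since $A$ is non-principal and Huneke-Wiegand, by definition there are relative ideals $P$ and $Q$ of $\Gamma_1$ with $P\cup Q=A$ and $(P+A^*)\cap(Q+A^*)\neq(P\cap Q)+A^*$. I would set $P':=a_1P+a_2B$ and $Q':=a_1Q+a_2B$, which are relative ideals of $\Gamma$, and first check that $P'\cup Q'=a_1(P\cup Q)+a_2B=a_1A+a_2B=E$ using Lemma~\ref{lem:mult}\eqref{lem:mult3}. This makes $\{P',Q'\}$ a candidate witnessing pair for $E$.

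Next I would identify the two sides of the Huneke-Wiegand inequality for $E$. By Lemma~\ref{lem:mult}\eqref{lem:mult9}, $E^*=a_1A^*+a_2B^*$. Writing $C:=B+B^*$, a relative ideal of $\Gamma_2$, and using only associativity and commutativity of setwise addition together with $a_1(X+Y)=a_1X+a_1Y$, one gets $P'+E^*=a_1(P+A^*)+a_2C$ and $Q'+E^*=a_1(Q+A^*)+a_2C$. Lemma~\ref{lem:mult}\eqref{lem:mult7} then yields $(P'+E^*)\cap(Q'+E^*)=a_1\bigl((P+A^*)\cap(Q+A^*)\bigr)+a_2C$. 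On the other side, Lemma~\ref{lem:mult}\eqref{lem:mult7} gives $P'\cap Q'=a_1(P\cap Q)+a_2B$, and rearranging as before gives $(P'\cap Q')+E^*=a_1\bigl((P\cap Q)+A^*\bigr)+a_2C$.

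Finally I would invoke Lemma~\ref{lem:mult}\eqref{lem:mult6}: the sets $(P+A^*)\cap(Q+A^*)$ and $(P\cap Q)+A^*$ are relative ideals of $\Gamma_1$ (sums and intersections of relative ideals are relative ideals), and they are distinct because $A$ is Huneke-Wiegand, so their extensions $a_1\bigl((P+A^*)\cap(Q+A^*)\bigr)+a_2C$ and $a_1\bigl((P\cap Q)+A^*\bigr)+a_2C$ are distinct as well. Hence $(P'+E^*)\cap(Q'+E^*)\neq(P'\cap Q')+E^*$, and $E$ is Huneke-Wiegand. I do not anticipate a serious obstacle here: the content is entirely contained in Lemma~\ref{lem:mult}, and the only points requiring care are verifying that every intermediate set is genuinely a relative ideal (so that \eqref{lem:mult6} and \eqref{lem:mult7} apply) and keeping the bookkeeping of the setwise sums straight. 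It is also worth noting that no separate treatment of the case ``$E$ principal'' is needed, since exhibiting the pair $\{P',Q'\}$ already gives the conclusion directly from the definition.
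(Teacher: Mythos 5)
Your proposal is correct and follows essentially the same route as the paper's proof: extend the witnessing pair $P,Q$ to $a_1P+a_2B$ and $a_1Q+a_2B$, compute both sides of the inequality using Lemma~\ref{lem:mult}~\eqref{lem:mult3}, \eqref{lem:mult7} and \eqref{lem:mult9}, and conclude with the cancellation property \eqref{lem:mult6}. No substantive differences from the paper's argument.
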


\begin{proof}
Let $P$ and $Q$ be relative ideals of $\Gamma_1$ such that $P\cup Q=A$ and 
\[
((P+A^*)\cap (Q+A^*))\neq((P\cap Q)+A^*).
\]
By Lemma \ref{lem:mult}~\eqref{lem:mult3}, we have $(a_1P+a_2B)\cup(a_1Q+a_2B)=a_1(P\cup Q)+a_2B=a_1A+a_2B$.  Moreover, by Lemma \ref{lem:mult}~\eqref{lem:mult7} and \eqref{lem:mult9} we have
\begin{align*}
((a_1P+a_2B)\cap(a_1Q+a_2B))+(a_1A+a_2B)^*&=a_1(P\cap Q)+a_2B+a_1A^*+a_2B^*\\
&=a_1((P\cap Q)+A^*)+a_2(B+B^*).
\end{align*}
Arguing analogously we get
\begin{align*}
((a_1P+a_2B)+(a_1A+a_2B)^*)\cap((a_1Q+a_2B)+(a_1A+a_2B)^*)\\
=(a_1(P+A^*)+a_2(B+B^*))\cap(a_1(Q+A^*)+a_2(B+B^*))\\
=a_1((P+A^*)\cap(Q+A^*))+a_2(B+B^*).
\end{align*}
Since $((P\cap Q)+A^*)\neq((P+A^*)\cap (Q+A^*))$, Lemma \ref{lem:mult}~\eqref{lem:mult6} implies that 
\[
a_1((P\cap Q)+A^*)+a_2(B+B^*)\neq a_1((P+A^*)\cap(Q+A^*))+a_2(B+B^*).
\]  
Thus $a_1A+a_2B$ is Huneke-Wiegand.
\end{proof}

At first glance it seems contradictory that Lemma \ref{lem:mult}~\eqref{lem:mult7} is essential to the proof of Proposition \ref{prop:extension}.  Lemma \ref{lem:mult}~\eqref{lem:mult7} shows that addition distributes over intersections in extensions. However, the Huneke-Wiegand property essentially says adding $A^*$ does not distribute over certain intersections.

\begin{prop}\label{mult}
Let $\Gamma_1$ and $\Gamma_2$ be numerical semigroups and $\Gamma=a_1\Gamma_1+a_2\Gamma_2$ be a gluing of $\Gamma_1$ and $\Gamma_2$. Let $A_1$ be a relative ideal of $\Gamma_1$. Then $A:=a_1A_1+a_2\Gamma_2$ is Huneke-Wiegand if and only if $A_1$ is Huneke-Wiegand.  
\end{prop}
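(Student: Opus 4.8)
The plan is to prove the two implications separately. One direction is immediate: if $A_1$ is Huneke-Wiegand, then $A = a_1A_1 + a_2\Gamma_2$ is Huneke-Wiegand by Proposition \ref{prop:extension} applied with $B = \Gamma_2$, provided $A_1$ is non-principal. If $A_1$ is principal, then $A$ is principal by Lemma \ref{lem:mult}\eqref{lem:mult1}, and principal ideals are Huneke-Wiegand by definition; so this direction needs no real work. The substance is the converse: assuming $A := a_1A_1 + a_2\Gamma_2$ is Huneke-Wiegand, deduce that $A_1$ is Huneke-Wiegand.

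For the converse, first dispose of the case where $A_1$ is principal — then there is nothing to prove since principal ideals are Huneke-Wiegand. So assume $A_1 = (x_1, \ldots, x_n)$ is non-principal, $n \geq 2$. Since $A$ is Huneke-Wiegand and non-principal (by Lemma \ref{lem:mult}\eqref{lem:mult6}, or because $A_1$ is non-principal), Remark \ref{rem:gen} gives a partition $\{S, S'\}$ of the generators of $A$ witnessing the Huneke-Wiegand inequality. The key observation is that, by Lemma \ref{lem:mult}\eqref{lem:mult1}, the minimal generators of $A = a_1A_1 + a_2\Gamma_2$ are exactly the $a_1x_i + a_2 \cdot 0 = a_1 x_i$ for $i = 1, \ldots, n$ (since $0$ is the minimal generator of $\Gamma_2$ as a relative ideal of itself, and the Apéry-type reductions in Lemma \ref{lem:mult}\eqref{lem:mult2} collapse everything onto $a_1A_1$). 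Hence a partition of the minimal generators of $A$ is literally a partition $\{S, S'\}$ of $\{1, \ldots, n\}$; set $P_1 = (x_i \mid i \in S)$ and $Q_1 = (x_j \mid j \in S')$ in $\Gamma_1$, and correspondingly $P = a_1 P_1 + a_2\Gamma_2$, $Q = a_1 Q_1 + a_2\Gamma_2$. Then $P \cup Q = A$ by Lemma \ref{lem:mult}\eqref{lem:mult3}.

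Now translate the Huneke-Wiegand inequality for $A$ back down to $\Gamma_1$. Using $A^* = a_1 A_1^* + a_2 \Gamma_2^* = a_1 A_1^* + a_2 \mathbb{Z}_{\geq 0}$... more precisely $A^* = a_1(A_1^*) + a_2(\Gamma_2^*)$ by Lemma \ref{lem:mult}\eqref{lem:mult9}, and $\Gamma_2^* = \Gamma_2$ since $\Gamma_2$ is a numerical semigroup, the computations in the proof of Proposition \ref{prop:extension} (run with $B = \Gamma_2$, so $B + B^* = \Gamma_2$) give
\[
(P + A^*) \cap (Q + A^*) = a_1\big((P_1 + A_1^*) \cap (Q_1 + A_1^*)\big) + a_2\Gamma_2
\]
and
\[
(P \cap Q) + A^* = a_1\big((P_1 \cap Q_1) + A_1^*\big) + a_2\Gamma_2.
\]
Since these two relative ideals of $\Gamma$ are distinct by hypothesis, Lemma \ref{lem:mult}\eqref{lem:mult6} forces $(P_1 + A_1^*) \cap (Q_1 + A_1^*) \neq (P_1 \cap Q_1) + A_1^*$ in $\Gamma_1$, so $A_1$ is Huneke-Wiegand.

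I expect the main obstacle to be verifying cleanly that partitions of the minimal generating set of $A$ correspond bijectively to partitions of $\{1,\ldots,n\}$ — i.e. that $a_1x_1, \ldots, a_1x_n$ really is the minimal generating set of $A$ and that no "extra" generators coming from $\Gamma_2$ intrude. This follows from Lemma \ref{lem:mult}\eqref{lem:mult2} (with $C = \Gamma_2$, so $\mathrm{Ap}(\Gamma_2, a_1)$ absorbs into $\Gamma_2$ and $A = a_1A_1 + a_2\Gamma_2$) together with the uniqueness of minimal generating sets and Lemma \ref{lem:mult}\eqref{lem:mult6}; one should check that distinct $x_i$ give distinct, non-redundant $a_1x_i$, which is exactly the content of Lemma \ref{lem:mult}\eqref{lem:mult6} applied to singletons. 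Everything else is a direct specialization of the already-established Proposition \ref{prop:extension} machinery to the case $B = \Gamma_2$, where the nuisance term $B + B^*$ degenerates to $\Gamma_2$ and the final application of Lemma \ref{lem:mult}\eqref{lem:mult6} is available precisely because the second coordinate is the same $\Gamma_2$ on both sides.
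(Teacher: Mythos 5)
Your proposal is correct and follows essentially the same route as the paper: both reduce to Remark \ref{rem:gen}, identify the generators of $A$ with $a_1x_1,\dots,a_1x_n$, compute $(P+A^*)\cap(Q+A^*)$ and $(P\cap Q)+A^*$ via the identities of Lemma \ref{lem:mult} (with the second component degenerating to $a_2\Gamma_2$), and conclude with the cancellation property \eqref{lem:mult6}. The only cosmetic differences are that you treat the two implications separately (quoting Proposition \ref{prop:extension} for the easy direction) and spend extra care on minimal generating sets, whereas the paper runs one biconditional computation that does not need minimality.
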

\begin{proof}
Let $A_1=(x_1,\hdots ,x_n)$.  By Remark \ref{rem:gen}, $A_1$ is Huneke-Wiegand if and only if there exists a partition $\{S,S'\}$ of $\{1,\hdots,n\}$ such that when $P_1=(x_i|\ i\in S)$ and $Q_1=(x_j|\ j\in S')$ we have  
$(P_1+A_1^*)\cap(Q_1+A_1^*)\neq(P_1\cap Q_1)+A_1^*$. Let $P=(a_1x_i|\ i\in S)$ and $Q=(a_1x_j|\ j\in S')$.  Since a similar statement holds for $A$, it suffices to show that $(P_1+A_1^*)\cap(Q_1+A_1^*)=(P_1\cap Q_1)+A_1^*$ if and only if $(P+A^*)\cap(Q+A^*)=(P\cap Q)+A^*$.  

It follows from the relations in Lemma \ref{lem:mult} that 
\[
(P+A^*)\cap(Q+A^*)=a_1((P_1+A_1^*)\cap(Q_1+A_1^*))+a_2\Gamma_2
\]
and 
$(P\cap Q)+A^*=a_1((P_1\cap Q_1)+A_1^*)+a_2\Gamma_2$. Therefore by Lemma \ref{lem:mult} \eqref{lem:mult6} we have 
$(P_1+A_1^*)\cap(Q_1+A_1^*)=(P_1\cap Q_1)+A_1^*$ if and only if $(P+A^*)\cap(Q+A^*)=(P\cap Q)+A^*$, and  the result follows. 
\end{proof}

\begin{disc}\label{rem:mult}
Let $\Gamma=a_1\Gamma_1+a_2\Gamma_2$ be a gluing of the numerical semigroups $\Gamma_1$ and $\Gamma_2$. Let $A=(x_0,\ldots,x_n)$ be a relative ideal of $\Gamma$ such that $a_1$ divides $x_i$ for all $i$. Let $A_1=(\frac{x_1}{a_1},\ldots,\frac{x_n}{a_1})$ be a relative ideal of $\Gamma_1$. In light of Lemma \ref{lem:mult} (\ref{lem:mult1}), $A=a_1A_1+a_2\Gamma_2$. Thus Proposition \ref{mult} implies that 
 $A$ is Huneke-Wiegand if and only if $A_1$ is Huneke-Wiegand.
\end{disc}

Given a set of integers $S$ the \emph{delta set} of $S$ is $\Delta(S):=\{ s-t ~|~ s,t\in S,\ t<s\}$.

\begin{lem}\label{lem:nmid}
Let $\Gamma$ be a symmetric numerical semigroup.  Choose $g$ in $\Gamma\setminus\{0\}$ and $a$ in $\mathbb{N}$, and assume that $\Delta(\mathrm{Ap}(\Gamma,g)\setminus\{0\})\subset a\mathbb N$.  Let $A$ be a relative ideal of $\Gamma$ with minimal generating set $\{x_0,x_1,\hdots ,x_n\}$, where $x_0=0$. 
Suppose that there exists $i$ such that $x_i$ is not in $a\mathbb N$. Then $A$ is Huneke-Wiegand.
\end{lem}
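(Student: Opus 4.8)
The strategy is to exhibit an explicit witnessing partition of the minimal generating set $\{x_0,\dots,x_n\}$ of $A$, and then to use the symmetry of $\Gamma$ together with the hypothesis $\Delta(\mathrm{Ap}(\Gamma,g)\setminus\{0\})\subset a\mathbb{N}$ to force a failure of distributivity. After a shift (which is harmless by Remark \ref{shifting}), we may assume $x_0=0$ and that all $x_i$ are chosen to be their least representatives in some suitable residue classes; the key structural fact we will extract from the $\mathrm{Ap}$-condition is that the generators $x_i$ that \emph{do} lie in $a\mathbb{N}$ and those that do not are separated modulo $a$ in a rigid way, since differences of elements of $\mathrm{Ap}(\Gamma,g)\setminus\{0\}$ are all multiples of $a$. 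So I would first sort the indices: let $S=\{i : x_i\in a\mathbb{N}\}$ (note $0\in S$) and $S'=\{i : x_i\notin a\mathbb{N}\}$, which is nonempty by hypothesis. Set $P=(x_i\mid i\in S)$ and $Q=(x_j\mid j\in S')$, so that $P\cup Q=A$; by Remark \ref{rem:gen} it suffices to show $(P+A^*)\cap(Q+A^*)\neq (P\cap Q)+A^*$.

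The second step is to compute, or at least to locate a distinguishing element in, each side. The inclusion $(P\cap Q)+A^*\subseteq (P+A^*)\cap(Q+A^*)$ is automatic, so I must produce an element of $(P+A^*)\cap(Q+A^*)$ that is not in $(P\cap Q)+A^*$. The natural candidate comes from the symmetry of $\Gamma$: because $\Gamma$ is symmetric, $A^*=\Gamma-_{\mathbb{Z}}A$ has a generator (or a distinguished element) tied to $\mathrm{F}(\Gamma)$ and the generators of $A$, and $\mathrm{Ap}(\Gamma,g)$ controls $\Gamma$ near its ``top.'' I would pick $p\in P$ and $q\in Q$ together with elements of $A^*$ so that $p + \alpha = q + \beta$ lands in the overlap, using that $p-q$ is \emph{not} in $a\mathbb{N}$ (since $p\in a\mathbb{N}$, $q\notin a\mathbb{N}$) whereas every honest cancellation forcing membership in $(P\cap Q)+A^*$ would require a difference lying in $a\mathbb{N}$ — this is exactly where $\Delta(\mathrm{Ap}(\Gamma,g)\setminus\{0\})\subset a\mathbb{N}$ is used, to show that any element of $(P\cap Q)+A^*$ reachable in two ways through $P$ and $Q$ must have its ``$P$-part minus $Q$-part'' congruent to $0$ mod $a$. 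Concretely, I expect the witness to be something like $2\mathrm{F}(\Gamma) + (\text{shift})$ or an element built from $\mathrm{F}(\Gamma)$ plus the two extreme generators, whose two representations through $P$ and through $Q$ cannot be reconciled because of the mod-$a$ obstruction.

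The third step is the verification that this witness genuinely lies in $(P+A^*)\cap(Q+A^*)$: this requires checking $p + \gamma_P$ and $q + \gamma_Q$ are equal for suitable $\gamma_P,\gamma_Q\in A^*$, which in turn reduces to membership statements in $\Gamma$ that follow from the symmetric characterization $\Gamma=\{x\in\mathbb{Z}\mid \mathrm{F}(\Gamma)-x\notin\Gamma\}$ and from how the generators interact with $\mathrm{Ap}(\Gamma,g)$. Finally one shows the witness is not in $(P\cap Q)+A^*$: here the point is that $P\cap Q$, being an intersection of ideals generated (on the $P$ side) by multiples of $a$ and (on the $Q$ side) by non-multiples, has all its elements constrained modulo $a$ in a way incompatible with the witness's residue. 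I would isolate this as the crux. \textbf{The main obstacle} is precisely identifying the correct element of $A^*$ and the correct witness so that both the ``it's in the intersection'' direction and the ``it's not in the sum'' direction go through simultaneously; the mod-$a$ bookkeeping coming from $\Delta(\mathrm{Ap}(\Gamma,g)\setminus\{0\})\subset a\mathbb{N}$ is the engine, but marrying it to the symmetry of $\Gamma$ to pin down an actual group element — rather than just a congruence class — is the delicate part, and I would expect to need a careful choice of $g$'s Apéry set representatives and possibly to treat $x_i$ modulo $g$ as well as modulo $a$.
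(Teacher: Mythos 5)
Your choice of partition is exactly the right one (and it coincides with the paper's: $P$ generated by the $x_j\in a\mathbb{N}$, $Q$ by the $x_h\notin a\mathbb{N}$), but beyond that point the proposal is a plan rather than a proof: the witness element is never identified, and both verifications are deferred ("I expect the witness to be something like $2\mathrm{F}(\Gamma)+(\text{shift})$\dots"; "the main obstacle is precisely identifying the correct element"). That identification is the entire content of the lemma, and your guess is not the right one. The correct witness is $\mathrm{F}(\Gamma)+g$, and with it both steps are short. For membership: minimality of $\{x_0,\dots,x_n\}$ gives $x_i-x_j\notin\Gamma$ for $i\neq j$, so symmetry gives $\mathrm{F}(\Gamma)-x_i+x_j\in\Gamma$ for $i\neq j$, and adding $g$ handles $i=j$ as well (since $\mathrm{F}(\Gamma)+g>\mathrm{F}(\Gamma)$); hence $\mathrm{F}(\Gamma)-x_i+g\in A^*$ for every $i$, so $\mathrm{F}(\Gamma)+g=x_i+(\mathrm{F}(\Gamma)-x_i+g)$ lies in $P+A^*$ and in $Q+A^*$ simultaneously. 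For non-membership: if $\mathrm{F}(\Gamma)+g=z+w$ with $z\in A^*$ and $w\in P\cap Q$, then there are generators $x_j\in a\mathbb{N}$ and $x_i\notin a\mathbb{N}$ with $\mathrm{F}(\Gamma)+g-z-x_j$ and $\mathrm{F}(\Gamma)+g-z-x_i$ in $\Gamma$; symmetry turns these into $z+x_j-g\notin\Gamma$ and $z+x_i-g\notin\Gamma$, while $z\in A^*$ puts $z+x_j$ and $z+x_i$ in $\Gamma\setminus\{0\}$, hence both lie in $\mathrm{Ap}(\Gamma,g)\setminus\{0\}$, and the hypothesis forces $a\mid x_i-x_j$, contradicting $a\mid x_j$, $a\nmid x_i$.

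So the gap is concrete: you never marry the symmetry of $\Gamma$ to the Ap\'ery-set hypothesis through an actual element, and the heuristic you do offer (least residue representatives, working modulo $g$ as well as modulo $a$, a witness near $2\mathrm{F}(\Gamma)$) points away from the simple choice $\mathrm{F}(\Gamma)+g$ that makes the argument work. Without exhibiting a witness and carrying out the two membership checks, the proposal does not establish the lemma.
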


\begin{proof}
Let $P=(x_j|\ x_j\in a\mathbb N)$ and let $Q=(x_h|\ x_h\not\in a\mathbb N)$. Set $F=F(\Gamma)$. We will show that $F+g$ is in $((P+A^*)\cap(Q+A^*))\setminus((P\cap Q)+A^*)$.   

Since $\{x_0,x_1,\hdots ,x_n\}$ is a minimal generating set, $x_i-x_j$ is not in $\Gamma$ for all $j\neq i$. Therefore $F-x_i+x_j$ is in $\Gamma$ for all $i\neq j$, and $F-x_i+x_j+g$ is in $\Gamma$ for all $i,j=0,1,\hdots,n$. It follows that $F-x_i+g$ is an element of $A^*$ for $i=0,\hdots,n$. Thus $F+g$ is in $(P+A^*)\cap(Q+A^*)$.

We will show by contradiction that $F+g$ is not in $(P\cap Q)+A^*$. Assume that $F+g$ is in $(P\cap Q)+A^*$. Then there exists $z$ in $A^*$ such that $F+g-z$ is in $P\cap Q$, so there exist $x_j$ in $P$ and $x_i$ in $Q$ such that $F+g-z-x_i$ and $F+g-z-x_j$ are in $\Gamma$.  Therefore $z+x_i-g$ and $z+x_j-g$ are not in $\Gamma$.  Since $z$ is in $A^*$, it follows that $z+x_i$ and $z+x_j$ are in $\Gamma\setminus\{0\}$. Hence $z+x_i$ and $z+x_j$ are in $\mathrm{Ap}(\Gamma,g)\setminus\{0\}$. By assumption $a$ divides $z+x_i-(z+x_j)=x_i-x_j$.  Since $a$ divides $x_i$, it must also divide $x_j$, which is a contradiction.  Thus 
$F+g$ is not in $(P\cap Q)+A^*$ and the result follows.
\end{proof}



\begin{thm}\label{hw-free}
Let $\Gamma=a_1\Gamma_1+a_2\mathbb N$ be a gluing of a symmetric numerical semigroup $\Gamma_1$ with $\mathbb N$.  Then $\Gamma$ is Huneke-Wiegand if and only if $\Gamma_1$ is Huneke-Wiegand.
\end{thm}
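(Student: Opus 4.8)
The plan is to prove the two implications separately. The forward implication — if $\Gamma$ is Huneke-Wiegand then so is $\Gamma_1$ — is essentially a restatement of Proposition \ref{mult}: for any relative ideal $A_1$ of $\Gamma_1$, the extension $A=a_1A_1+a_2\mathbb N$ is a relative ideal of $\Gamma$, hence Huneke-Wiegand by hypothesis, and Proposition \ref{mult} then forces $A_1$ to be Huneke-Wiegand. Since $A_1$ was arbitrary, $\Gamma_1$ is Huneke-Wiegand.

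For the converse, assume $\Gamma_1$ is Huneke-Wiegand and let $A$ be a relative ideal of $\Gamma$; we must show $A$ is Huneke-Wiegand, and we may assume $A$ is not principal. By Remark \ref{shifting} we may shift $A$ so that $0$ is its least element; then $0$ belongs to the minimal generating set, which I write as $\{x_0=0,x_1,\dots,x_n\}$. I then split into two cases. If $a_1\mid x_i$ for every $i$, then Remark \ref{rem:mult} identifies $A$ with an extension $a_1A_1+a_2\mathbb N$ for a suitable relative ideal $A_1$ of $\Gamma_1$; since $\Gamma_1$ is Huneke-Wiegand, $A_1$ is Huneke-Wiegand, and Proposition \ref{mult} gives that $A$ is too. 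If instead some $x_i$ is not a multiple of $a_1$, I would apply Lemma \ref{lem:nmid} with $a=a_1$ and the distinguished element $g=a_2$ (which lies in $\Gamma\setminus\{0\}$, since $a_2=a_1\cdot 0+a_2\cdot 1\in a_1\Gamma_1+a_2\mathbb N$). This requires knowing that $\Gamma$ is symmetric — which holds because a gluing of two numerical semigroups is symmetric when both are, and $\Gamma_1$ and $\mathbb N$ are symmetric (classical; see e.g. \cite[Chapter 8]{ns-book} or \cite{Delorme}) — and, crucially, that $\Delta(\mathrm{Ap}(\Gamma,a_2)\setminus\{0\})\subset a_1\mathbb N$.

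The core computation is this last containment. I would first record the canonical form for elements of the gluing: every $s\in\Gamma$ is uniquely $s=a_1\gamma+a_2r$ with $\gamma\in\Gamma_1$ and $0\le r<a_1$. Existence follows by writing $s=a_1\gamma'+a_2m$ and reducing $m$ modulo $a_1$, absorbing the quotient into the $\Gamma_1$-component via $a_2\in\Gamma_1$; uniqueness follows from $\gcd(a_1,a_2)=1$, which pins down $r$ modulo $a_1$ and hence $\gamma$. Using this normal form one checks that, for $s=a_1\gamma+a_2r$, the element $s-a_2$ fails to lie in $\Gamma$ exactly when $r=0$ and $\gamma-a_2\notin\Gamma_1$; hence $\mathrm{Ap}(\Gamma,a_2)=a_1\,\mathrm{Ap}(\Gamma_1,a_2)$. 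Removing $0$ from both sides, $\mathrm{Ap}(\Gamma,a_2)\setminus\{0\}=a_1(\mathrm{Ap}(\Gamma_1,a_2)\setminus\{0\})$, so every element of $\Delta(\mathrm{Ap}(\Gamma,a_2)\setminus\{0\})$ is a nonnegative multiple of $a_1$, as needed. Lemma \ref{lem:nmid} then yields that $A$ is Huneke-Wiegand, completing the converse.

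The \emph{main obstacle} is choosing the right auxiliary element for Lemma \ref{lem:nmid}: the familiar Apéry set of a gluing, $\mathrm{Ap}(\Gamma,a_1a_2)$, does not have all its nonzero members congruent modulo $a_1$, so it cannot feed Lemma \ref{lem:nmid}; it is precisely the choice $g=a_2$ together with the hypothesis $\Gamma_2=\mathbb N$ — which is what collapses the $\mathbb N$-part of an element of $\Gamma$ to a residue in $\{0,\dots,a_1-1\}$ — that makes $\mathrm{Ap}(\Gamma,a_2)$ a scalar multiple of $\mathrm{Ap}(\Gamma_1,a_2)$. The degenerate situations $a_1=1$ or $a_2=1$ force $\Gamma=\Gamma_1$ and are trivial, so we may freely assume $a_1,a_2\ge 2$; everything else (manipulating extensions, passing between $A$ and $A_1$) is routine once Lemma \ref{lem:mult} and Proposition \ref{mult} are in hand.
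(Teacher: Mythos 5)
Your proposal is correct and follows essentially the same route as the paper: Proposition \ref{mult} for the forward direction, and for the converse a shift to $x_0=0$, the case split on whether $a_1$ divides every minimal generator, Remark \ref{rem:mult} in the first case, and Lemma \ref{lem:nmid} with $g=a_2$, $a=a_1$ in the second. The only difference is that you spell out what the paper merely asserts, namely that $\Gamma$ is symmetric and that $\mathrm{Ap}(\Gamma,a_2)=a_1\,\mathrm{Ap}(\Gamma_1,a_2)\subset a_1\mathbb N$, and your verification of these facts is correct.
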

\begin{proof}
Suppose that $\Gamma_1$ is not Huneke-Wiegand.  Then there exists a relative ideal $A_1$ of $\Gamma_1$, which is not Huneke-Wiegand.  By Proposition \ref{mult}, the relative ideal $a_1A_1+a_2\mathbb{N}$ is not Huneke-Wiegand.  Thus $\Gamma$  is not Huneke-Wiegand.

Now suppose that $\Gamma_1$ is Huneke-Wiegand.
Let $A$ be a non-principal relative ideal of $\Gamma$ minimally generated by $\{x_0,\ldots, x_n\}$ for some positive integer $n$. By Remark \ref{shifting}, we may assume that $x_0=0$. 

If for every $x_i$, $a_1$ divides $x_i$, then by Remark \ref{rem:mult}, $A$ fulfills the Huneke-Wiegand property.

Assume that there exists $i$ in $\{0,\ldots,n\}$ such that $x_i$ is not in $a_1\mathbb N$. Note that $a_2$ is in $\Gamma$ and that $\mathrm{Ap}(\Gamma,a_2)\subset a_1\mathbb N$. We apply Lemma \ref{lem:nmid} with $g=a_2$ and $a=a_1$, and the result follows.
\end{proof}

Suppose that $\Gamma$ is a numerical semigroup of embedding dimension $n$ for some $n\geq 1$. We say that $\Gamma$ is \emph{free} (in the sense of Bertin and Carbonne; see \cite{bc}) if either $\Gamma=\mathbb N$ or if $\Gamma$ is the gluing of $\mathbb N$ with a free numerical semigroup of embedding dimension $n-1$.

\begin{cor}\label{free}
Free numerical semigroups are Huneke-Wiegand.
\end{cor}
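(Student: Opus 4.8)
The plan is to induct on the embedding dimension $n$ of the free numerical semigroup $\Gamma$. The base case $n=1$ forces $\Gamma=\mathbb N$, which is already known to be Huneke-Wiegand by the Example above. For the inductive step I would assume the claim for all free numerical semigroups of embedding dimension $n-1$ and take $\Gamma$ free of embedding dimension $n\geq 2$. By definition $\Gamma$ is a gluing of $\mathbb N$ with a free numerical semigroup $\Gamma_1$ of embedding dimension $n-1$; since a gluing $a_1\Gamma_1+a_2\Gamma_2$ is unchanged under swapping the two factors, I may write $\Gamma=a_1\Gamma_1+a_2\mathbb N$ for a suitable $a_1$ in $\mathbb N$ and $a_2$ in $\Gamma_1$.

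To invoke Theorem \ref{hw-free} I need $\Gamma_1$ to be symmetric. Here I would cite the standard fact that free numerical semigroups are complete intersections (Bertin--Carbonne \cite{bc}) and that complete intersection numerical semigroups are symmetric (see \cite[Chapter 3]{ns-book}); alternatively one can argue directly that symmetry is preserved under gluing. With $\Gamma_1$ symmetric, Theorem \ref{hw-free} gives that $\Gamma$ is Huneke-Wiegand if and only if $\Gamma_1$ is, and the induction hypothesis applied to $\Gamma_1$ then shows that $\Gamma$ is Huneke-Wiegand.

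The only real obstacle is ensuring that the hypothesis ``$\Gamma_1$ symmetric'' in Theorem \ref{hw-free} is met at every stage of the induction; everything else is just bookkeeping about the recursive structure of free semigroups. Since the class of free numerical semigroups is closed under the gluing operation that defines it and consists entirely of complete intersections, this symmetry requirement is automatic, so the induction goes through without further complications.
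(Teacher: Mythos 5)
Your proof is correct and follows exactly the route the paper intends: the corollary is immediate from the recursive definition of free semigroups together with Theorem \ref{hw-free}, by induction on embedding dimension with base case $\mathbb N$. Your attention to the symmetry hypothesis (free semigroups are complete intersections, hence symmetric, or equivalently symmetry is preserved by gluing) fills in the one detail the paper leaves implicit, and it is handled correctly.
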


Free numerical semigroups include telescopic numerical semigroups and numerical semigroups associated to an irreducible planar curve singularity.

\section{Arithmetic sequences over numerical semigroups}

Let $\Gamma$ be a numerical semigroup. An \emph{arithmetic sequence} in $\Gamma$ with step size $s$ in $\mathbb N\setminus\Gamma$ is a sequence of the form $(x,x+s,\ldots,x+ns)\subseteq \Gamma$ ($n>0$ is called the number of steps). We will denote this sequence by $(x;s;n)$. Two sequences with the same step size can be added by using set addition, and as a result we get $(x;s;n)+(y;s;m)=(x+y;s;n+m)$. The set of arithmetic sequences in $\Gamma$ with step size $s$ is therefore a semigroup, which we denote by $S_\Gamma^s$. An irreducible sequence is a sequence that cannot be expressed as the sum of two sequences. 

\begin{disc}\label{irreducible-seq-HW}
Let $\Gamma$ be a numerical semigroup, and let $A$ be a relative ideal of $\Gamma$ minimally generated by $\{0,s\}$. It follows from \cite[Proposition 4.4]{Leamer} that $A$ is Huneke-Wiegand if and only if there exists an irreducible sequence of the form $(x;s;2)$ in $S_{\Gamma}^s$.
\end{disc}

Note that a similar result to Proposition \ref{mult} can also be obtained for irreducible sequences.

\begin{lem}\label{shift-apery}
 Let $\Gamma$ be a symmetric numerical semigroup, and let $a$ be an element of $\Gamma\setminus\{0\}$. Given $s$ in $\mathbb{N}\setminus\Gamma$ and not in $\Delta(\mathrm{Ap}(\Gamma,a)))$,  the sequence 
$(\mathrm F(\Gamma)+a-s;s;2)$ is irreducible in $S_{\Gamma}^s$. Therefore for all $s$ in $\mathbb N\setminus (\bigcap_{a\in \Gamma\setminus\{0\}}\Delta(\mathrm{Ap}(\Gamma,a)))$, the relative ideal $(0,s)$ is Huneke-Wiegand.
\end{lem}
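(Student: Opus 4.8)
The plan is to prove the two assertions of Lemma~\ref{shift-apery} in turn, with the first (irreducibility of the specific sequence) doing all the work and the second following as an immediate corollary via Remark~\ref{irreducible-seq-HW}.

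\medskip

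First I would fix notation: write $F=\mathrm F(\Gamma)$ and set $x=F+a-s$, so the sequence in question is $(x,\,x+s,\,x+2s)=(F+a-s,\,F+a,\,F+2a-s+(s-a))$; more usefully, its three terms are $F+a-s$, $F+a$, $F+a+s$. I would first check this is genuinely a sequence in $S_\Gamma^s$, i.e.\ that all three terms lie in $\Gamma$. The middle term $F+a$ lies in $\Gamma$ since $a\in\Gamma\setminus\{0\}$ and $F+a>F$. For $F+a-s$ and $F+a+s$: since $\Gamma$ is symmetric, $z\in\Gamma \iff F-z\notin\Gamma$. So $F+a-s\in\Gamma \iff F-(F+a-s)=s-a\notin\Gamma$, and $F+a+s\in\Gamma\iff -a-s\notin\Gamma$, which holds trivially since $a+s>0$. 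The point $s-a\notin\Gamma$ needs an argument: if $s-a\in\Gamma$ then $s=(s-a)+a\in\Gamma$, contradicting $s\in\mathbb N\setminus\Gamma$ (here one also needs $s-a\ge 0$, i.e.\ $s>a$; if $s<a$ then $s-a<0$ so automatically $s-a\notin\Gamma$, and $s=a$ is impossible since $a\in\Gamma$, $s\notin\Gamma$). So all three terms are in $\Gamma$ and the sequence is well-defined.

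\medskip

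Next, the heart of the argument: suppose for contradiction that $(x;s;2)=(y;s;1)+(z;s;1)$ with $(y;s;1),(z;s;1)\in S_\Gamma^s$, i.e.\ $x=y+z$, $y,y+s,z,z+s\in\Gamma$, and $s\notin\Gamma$. From $y+z=x=F+a-s$ we get $(y+s)+(z+s)=F+a+s$, hence $F-\bigl((y+s)+(z+s)\bigr)=-a-s\notin\Gamma$, which is no contradiction directly — so instead I would argue with the Ap\'ery set. The key: $y+s\in\Gamma$ and $(y+s)-a$ — is it in $\Gamma$? We have $y+s=F+a-s-z+s=F+a-z$, so $(y+s)-a=F-z\notin\Gamma$ precisely when $z\in\Gamma$, which holds. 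Hence $y+s\in\mathrm{Ap}(\Gamma,a)$. Symmetrically, $z+s=F+a-y$, so $(z+s)-a=F-y\notin\Gamma$ since $y\in\Gamma$; hence $z+s\in\mathrm{Ap}(\Gamma,a)$. Now $(y+s)-(z+s)=y-z$, and $(y+s)+(z+s)=F+a+s$. Also note $y,z\ge 0$ are not both equal (or if they are, handle separately) — actually the cleaner finish: $y+s$ and $z+s$ are two elements of $\mathrm{Ap}(\Gamma,a)$ whose difference is $y-z=2y-x=2y-F-a+s$. I need to exhibit $s$ as an element of $\Delta(\mathrm{Ap}(\Gamma,a))$ to contradict the hypothesis $s\notin\Delta(\mathrm{Ap}(\Gamma,a))$. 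Consider $y+s\in\mathrm{Ap}(\Gamma,a)$ and $y\in\Gamma$: is $y\in\mathrm{Ap}(\Gamma,a)$? If $y-a\notin\Gamma$ then yes, and then $(y+s)-y=s\in\Delta(\mathrm{Ap}(\Gamma,a))$, contradiction. If instead $y-a\in\Gamma$, replace $y$ by $y-a$ and iterate: let $y'=y-ka$ be the Ap\'ery representative with $y'\in\mathrm{Ap}(\Gamma,a)$, $k\ge 0$. Then $y'+s=(y+s)-ka$; I must check $y'+s\in\mathrm{Ap}(\Gamma,a)$, i.e.\ $(y'+s)-a\notin\Gamma$. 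Since $y+s=F+a-z$ with $z\in\Gamma$, more generally $(y+s)-ja$ for $0\le j\le k$: is $(y+s)-(k+1)a\notin\Gamma$? Hmm, this requires that $y+s$ minus multiples of $a$ stays ``linked'' — I expect one shows $(y'+s)-a = F - (z+ka+ \text{something})\notin\Gamma$. The robust way: $y'+s\in\Gamma$ (since $y'\in\Gamma$, $s$... wait $s\notin\Gamma$) — careful, need $y'+s\in\Gamma$ directly. Since $y'=y-ka$ and we'd want $y'+s\in\Gamma$: not obvious. So the main obstacle is exactly this: tracking that after descending $y$ into the Ap\'ery set, the shifted element $y+s$ descends compatibly and stays in the Ap\'ery set, so that $s$ is realized as a difference of two Ap\'ery elements.

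\medskip

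So my intended clean finish avoids the descent: from $y+s\in\mathrm{Ap}(\Gamma,a)$ and $z+s\in\mathrm{Ap}(\Gamma,a)$ and the relation $(y+s)+(z+s)=F+a+s$, I would instead observe that $\{0,y+s\}$ or the pair $\{z+s,\,z+s+s'\}$... Actually the genuinely clean route: show $y+s$ and $s+y$ vs. taking $s' = (y+s) - (\text{some Ap\'ery element} \le y)$. The decisive identity is $(y+s) = a + (F - z)$ is false since $F-z\notin\Gamma$; rather $(y+s)+(z+s) - a = F + s$, so $F+s = (y+s) + (z+s-a)$ but $z+s-a\notin\Gamma$. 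I will settle this by the following argument, which I'm fairly confident closes it: since $y+s,z+s\in\mathrm{Ap}(\Gamma,a)$ and $0\in\mathrm{Ap}(\Gamma,a)$, and $s = (y+s) - y$ with $y\in\Gamma$; write $y = \sum$ and reduce $y$ modulo $a$ to $\bar y\in\mathrm{Ap}(\Gamma,a)$ with $y=\bar y + ma$; then I claim $\bar y + s\in\mathrm{Ap}(\Gamma,a)$ because $(\bar y+s)-a\notin\Gamma$ — indeed if $(\bar y+s)-a\in\Gamma$ then $(y+s)-a = (\bar y+s)-a + ma\in\Gamma$, contradicting $y+s\in\mathrm{Ap}(\Gamma,a)$; and $\bar y+s\in\Gamma$ because $\bar y+s = (y+s)-ma$ and — here is where I need $(y+s)-ja\in\Gamma$ for all $0\le j\le m$, which follows from $y+s\in\Gamma$ together with $(y+s)-a,\dots$; actually since $y+s\in\mathrm{Ap}(\Gamma,a)$ we have $(y+s)-a\notin\Gamma$, so $m$ must be... no. The correct observation: since $y\in\Gamma$ and $y=\bar y+ma$ with each of $\bar y, \bar y+a,\dots,\bar y+ma=y$ in $\Gamma$ (as $\bar y\in\Gamma$, $a\in\Gamma$), we get $\bar y+s\in\Gamma$ only if $s$ fills the gap — no, $\bar y+s$ need not be in $\Gamma$. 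Therefore I will instead present the proof at the level: $s = (y+s) - (z+s) + (z) \dots$ — no. I concede that the clean finish is: take $u = y+s\in\mathrm{Ap}(\Gamma,a)$; since $y\in\Gamma$, write the Ap\'ery reduction and use minimality to get a strictly smaller Ap\'ery element $u'=u - ja \in \mathrm{Ap}(\Gamma,a)$ for the maximal valid $j$, check $u' \ge s$ wait. Given the complications, the proof I will actually write will hinge on: \emph{the maximal $k\ge 0$ with $(y+s)-ka\in\Gamma$ and $y-ka\in\Gamma$ yields, for $w=y-ka$, that $w\in\mathrm{Ap}(\Gamma,a)$, $w+s\in\mathrm{Ap}(\Gamma,a)$, whence $s=(w+s)-w\in\Delta(\mathrm{Ap}(\Gamma,a))$}, contradiction; the main obstacle I flag is verifying $w+s\in\Gamma$, for which I will use that $w+s = (z+s) + \text{(nonneg. element)}$ or symmetry between $y$ and $z$, choosing whichever of $y,z$ is reduced. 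Once the irreducibility of $(F+a-s;s;2)$ is established for each such $s$, the final sentence of the lemma follows: if $s\in\mathbb N\setminus\bigl(\bigcap_{a\in\Gamma\setminus\{0\}}\Delta(\mathrm{Ap}(\Gamma,a))\bigr)$ then $s\notin\Delta(\mathrm{Ap}(\Gamma,a))$ for some $a\in\Gamma\setminus\{0\}$ (and $s\notin\Gamma$ automatically, since $\Delta(\mathrm{Ap}(\Gamma,a))\supseteq$... we need $s\notin\Gamma$: note $a\in\mathrm{Ap}(\Gamma,a)$ is false, but if $s\in\Gamma$ then... one must additionally know $s\notin\Gamma$; I'll note $\Gamma\setminus\{0\} \subseteq \bigcap_a \Delta(\mathrm{Ap}(\Gamma,a))$ fails in general, so the lemma as stated presumably intends $s\in\mathbb N\setminus\Gamma$ to be a standing hypothesis carried from the first sentence — I will state it that way), so by the first part the sequence $(F(\Gamma)+a-s;s;2)$ is irreducible in $S_\Gamma^s$, and by Remark~\ref{irreducible-seq-HW} the relative ideal $(0,s)$ is Huneke-Wiegand.
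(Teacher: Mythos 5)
Your setup is fine (the membership of the three terms in $\Gamma$ is verified correctly, if a bit more laboriously than needed: symmetry gives $F-s\in\Gamma$ directly from $s\notin\Gamma$, hence $F+a-s\in\Gamma$), and you correctly extract half of the key information from a hypothetical factorization $x=y+z$: writing $y+s=F+a-z$ and $z+s=F+a-y$, symmetry applied to $z\in\Gamma$ and $y\in\Gamma$ gives $(y+s)-a=F-z\notin\Gamma$ and $(z+s)-a=F-y\notin\Gamma$, so $y+s$ and $z+s$ lie in $\mathrm{Ap}(\Gamma,a)$. But then you stall: you ask whether $y$ itself is in $\mathrm{Ap}(\Gamma,a)$, and instead of answering it you launch a descent (reduce $y$ modulo $a$ to its Ap\'ery representative and try to show the shifted element descends with it), which you correctly recognize you cannot close, because there is no reason $\bar y+s\in\Gamma$. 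That unfinished descent is a genuine gap: as written, the irreducibility of $(F+a-s;s;2)$ is not proved.

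The missing step is one line, and it is exactly the same symmetry computation you already used, applied to the \emph{other} member of the pair: since $z+s\in\Gamma$, symmetry gives $y-a=F-(z+s)\notin\Gamma$, and since $y\in\Gamma$ this puts $y\in\mathrm{Ap}(\Gamma,a)$ with no reduction needed. Then $s=(y+s)-y\in\Delta(\mathrm{Ap}(\Gamma,a))$, contradicting the hypothesis, which is precisely the paper's argument. A secondary point: for the last sentence of the lemma you should not weaken the statement by adding $s\notin\Gamma$ as a hypothesis; the case $s\in\Gamma$ is covered because then $(0,s)=\Gamma$ is principal, and principal relative ideals are Huneke--Wiegand by definition, which is how the paper disposes of it before invoking Remark~\ref{irreducible-seq-HW} for $s\notin\Gamma$.
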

\begin{proof}
Set $F=\mathrm F(\Gamma)$. The symmetry of $\Gamma$ implies that $F-s$ is in $\Gamma$, and consequently $F-s+a$ is in $\Gamma$. Also $F+a$ and $F+a+s$ are larger than $F$, so they also belong to $\Gamma$; hence 
$(\mathrm F(\Gamma)+a-s;s;2)$ is in $S_{\Gamma}^s$.

Suppose that $(\mathrm F(\Gamma)+a-s;s;2)$ has a factor $(y;s;1)$. Then $F+a-s-y$ and  $F+a-y$ are in $\Gamma$, and the symmetry of $\Gamma$ yields that $y+s-a$ and $y-a$ are not in $\Gamma$. Since by assumption we have that $y$ and $y+s$ are in $\Gamma$, it follows that  $y$ and $y+s$ are in $\mathrm{Ap}(\Gamma,a)$; hence $s=y+s-y$ is in  $\Delta(\mathrm{Ap}(\Gamma,a))$.

If $s$ is in $\Gamma$, then $(0,s)=0+\Gamma$ is a principal ideal. If $s$ is not in $\Gamma$ and $s$ is in $\mathbb N\setminus (\bigcap_{a\in \Gamma\setminus\{0\}}\Delta(\mathrm{Ap}(\Gamma,a)))$, then there exists $a$ in $\Gamma\setminus\{0\}$ such that $s$ is not in $\Delta(\mathrm{Ap}(\Gamma,a))$. As $(\mathrm F(\Gamma)+a-s;s;2)$ is irreducible, Remark \ref{irreducible-seq-HW} implies that $(0,s)$ is Huneke-Wiegand.
\end{proof}

\begin{ex}\label{example-delta}
All computations in this example were done with the \texttt{numericalsgp} package \cite{numericalsgps}. Let $\Gamma = \langle 6, 15, 16, 25, 26\rangle$. Then $\Gamma$ is a symmetric numerical semigroup and 
$\bigcap_{a\in \Gamma\setminus\{0\}}\Delta(\mathrm{Ap}(\Gamma,a))=\{1,9,10\}$. Irreducible sequences for $s$ equal to $1$, $9$ and $10$ are $(24,25,26)$, $(6,15,24)$ and $(6,16,26)$ respectively. Hence in light of Lemma \ref{shift-apery}, every two-generated ideal of $\Gamma$ is  Huneke-Wiegand.
\end{ex}


Observe that if $\Gamma=a_1\Gamma_1+a_2\Gamma_2$ is a gluing of $\Gamma_1$ and $\Gamma_2$, then $a_1a_2$ is in $\Gamma$. By Lemma \ref{shift-apery}, if $s$ is a gap of $\Gamma$ that is not in 
$\Delta(\mathrm{Ap}(\Gamma,a_1a_2))$, then the ideal $(0,s)$ is Huneke-Wiegand. Also if $s$ is a multiple of $a_i$ for $i$ equal to 1 or 2 and $(0,\frac{s}{a_i})$ is Huneke-Wiegand in $\Gamma_i$, then Remark \ref{rem:mult} ensures that $(0,s)$ is Huneke-Wiegand in $\Gamma$. When 
$s$ is in $\Delta(\mathrm{Ap}(\Gamma,a_1a_2))$ and $s$ is not a multiple of $a_1$ or $a_2$ we will prove the existence of an irreducible sequence of the form $(x;s;2)$ in $S_{\Gamma}^s$. 
The choice of $a_1a_2$ is not arbitrary. Indeed, as we see in the next remark this Ap\'ery set has a special construction. 


\begin{disc}\label{ap-gluing}
Let $\Gamma=a_1\Gamma_1+a_2\Gamma_2$ be a gluing of $\Gamma_1$ and $\Gamma_2$. Then from 
\cite[Theorem 9.2]{ns-book} it follows that
$\mathrm{Ap}(\Gamma,a_1a_1)=a_1\mathrm{Ap}(\Gamma_1,a_2)+a_2\mathrm{Ap}(\Gamma_2,a_1)$.
\end{disc}
%
%


Let $\Gamma=a_1\Gamma_1+a_2\Gamma_2$ be a gluing of two numerical semigroups. Next we will focus on sequences with two steps having the middle term and at least one other term in 
$\mathrm{Ap}(\Gamma,a_1a_2)$. We shall see that should these sequences factor, then the original sequence can be constructed from arithmetic sequences in $\Gamma_1$ and $\Gamma_2$ that also factor.

\begin{lem}\label{Apfactor}
Let $\Gamma_1$ and $\Gamma_2$ be symmetric numerical semigroups. Let $\Gamma=a_1\Gamma_1+a_2\Gamma_2$ be a gluing of $\Gamma_1$ and $\Gamma_2$. Let $s$ be in $\mathbb{N}\setminus\Gamma$ and suppose that $(x;s;2)=(y;s;1)+(z;s;1)$ in $S_{\Gamma}^s$.
Then the following conditions hold.
\begin{enumerate}
\item \label{Ap1} If $x$ and $x+s$ are in $\mathrm{Ap}(\Gamma,a_1a_2)$, then there exist unique elements of the form $x_1,\ x_1+s_1$ in $\mathrm{Ap}(\Gamma_1,a_2)$ and $x_2,\ x_2+s_2$ in $\mathrm{Ap}(\Gamma_2,a_1)$ with $x=a_1x_1+a_2x_2$ and $s=a_1s_1+a_2s_2$.
\item \label{Ap2}  If $x+s$ and $x+2s$ are in $\mathrm{Ap}(\Gamma,a_1a_2)$, then there exist unique elements $x_1+s_1,\ x_1+2s_1$ in $\mathrm{Ap}(\Gamma_1,a_2)$ and $x_2+s_2,\ x_2+2s_2$ in $\mathrm{Ap}(\Gamma_2,a_1)$ with $x=a_1x_1+a_2x_2$ and $s=a_1s_1+a_2s_2$.
\end{enumerate}
In both of these cases there exist $y_1,\ y_2,\ z_1$ and $z_2$ such that the follwing hold:
\begin{itemize}
\item $y_1,\ y_1+s_1,\ z_1,\ z_1+s_1\in\mathrm{Ap}(\Gamma_1,a_2)$;
\item $y_2,\ y_2+s_2,\ z_2,\ z_2+s_2\in\mathrm{Ap}(\Gamma_2,a_1)$; 
\item $y=a_1y_1+a_2y_2$;
\item $z=a_1z_1+a_2z_2$; 
\item $x_1=y_1+z_1$; and 
\item $x_2=y_2+z_2$. 
\end{itemize}
Thus 
\begin{align*}a_1((y_1;s_1;1)+(z_1;s_1;1))+a_2((y_2;s_2;1)+(z_2;s_2;1))&=a_1(x_1;s_1;2)+a_2(x_2;s_2;2)\\
&=(x;s;2).
\end{align*} 
\end{lem}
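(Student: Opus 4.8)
The plan is to exploit the fact that $a_1a_2 \in \Gamma$ together with the description of $\mathrm{Ap}(\Gamma, a_1a_2)$ from Remark~\ref{ap-gluing}, namely $\mathrm{Ap}(\Gamma,a_1a_2) = a_1\mathrm{Ap}(\Gamma_1,a_2) + a_2\mathrm{Ap}(\Gamma_2,a_1)$. The first point to establish is that this decomposition is \emph{unique}: if $w \in \mathrm{Ap}(\Gamma,a_1a_2)$ then $w = a_1w_1 + a_2w_2$ with $w_1 \in \mathrm{Ap}(\Gamma_1,a_2)$, $w_2 \in \mathrm{Ap}(\Gamma_2,a_1)$, and the pair $(w_1,w_2)$ is determined by $w$. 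Uniqueness follows from a congruence argument: since $\gcd(a_1,a_2)=1$, the residue of $w$ modulo $a_1$ is $a_2 w_2 \bmod a_1$, and distinct elements of $\mathrm{Ap}(\Gamma_2,a_1)$ lie in distinct residue classes mod $a_1$ (that is the defining property of an Apéry set with respect to $a_1$), so $w_2$ is forced, hence $w_1$ is too. This is the key structural lemma and I would prove it first.

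**Deriving the components of $x$ and $s$.** For case~\eqref{Ap1}, apply uniqueness to $w = x$ and to $w = x+s$, obtaining $x = a_1x_1 + a_2x_2$ and $x+s = a_1x_1' + a_2x_2'$ with all four components in the appropriate Apéry sets. Setting $s_1 = x_1' - x_1$ and $s_2 = x_2' - x_2$ gives $s = a_1s_1 + a_2s_2$ (this decomposition of $s$ need not have Apéry components, but that is fine — only $x_1, x_1+s_1, x_2, x_2+s_2$ are claimed to be in the Apéry sets, and those are exactly $x_1, x_1', x_2, x_2'$). Case~\eqref{Ap2} is identical, applied to $w = x+s$ and $w = x+2s$; here the natural names are $x_1+s_1 := $ (first component of $x+s$), $x_1 + 2s_1 := $ (first component of $x+2s$), and then $x_1$ and $x_2$ are defined by subtracting, with $x = a_1x_1 + a_2x_2$ following automatically. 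In both cases the uniqueness statement of the lemma is precisely the uniqueness of the Apéry decomposition already proved.

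**Handling the factorization $y+z$.** Now $x = y+z$ with $y, y+s, z, z+s \in \Gamma$ and $x, x+s$ (or $x+s, x+2s$) in $\mathrm{Ap}(\Gamma, a_1a_2)$. The crucial observation is that if $u + v \in \mathrm{Ap}(\Gamma, a_1a_2)$ with $u,v \in \Gamma$, then $u, v \in \mathrm{Ap}(\Gamma, a_1a_2)$ as well — because $u \in \Gamma$ and $u - a_1a_2 \in \Gamma$ would force $(u+v) - a_1a_2 \in \Gamma$, contradicting $u+v$ being in the Apéry set. Applying this to $y + z = x$ and to $(y+s)+(z+s) = x+2s$: in case~\eqref{Ap1} we know $x \in \mathrm{Ap}$, so $y, z \in \mathrm{Ap}(\Gamma,a_1a_2)$; combined with $x+s \in \mathrm{Ap}$, we also want $y+s, z+s$ in the Apéry set, which I would get from the identity $(y+s) + (z+s) = x + 2s$ — but here I need $x + 2s \in \mathrm{Ap}(\Gamma, a_1a_2)$, which is \emph{not} assumed in case~\eqref{Ap1}. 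This is the subtle point. In case~\eqref{Ap1} one should instead run the argument using $x$ and $x+s$: from $y + z = x \in \mathrm{Ap}$ get $y,z \in \mathrm{Ap}$; then apply uniqueness to decompose $y = a_1y_1 + a_2y_2$ and $z = a_1z_1 + a_2z_2$; then $x = y+z$ forces $x_1 = y_1 + z_1$ and $x_2 = y_2 + z_2$ by uniqueness of the decomposition of $x$. For $y+s, z+s$: decompose $x+s = a_1(x_1+s_1) + a_2(x_2+s_2) = (a_1y_1 + a_2y_2) + (a_1z_1+a_2z_2) + s$, and one shows $y+s$ and $z+s$ are themselves in $\mathrm{Ap}(\Gamma_1,a_2)$-times-$a_1$-plus-$\cdots$ form by observing $y + s = a_1(y_1 + s_1) + a_2(y_2 + s_2)$ once we know $y_1 + s_1 \in \mathrm{Ap}(\Gamma_1,a_2)$ and $y_2 + s_2 \in \mathrm{Ap}(\Gamma_2,a_1)$ — and \emph{that} follows because $y + s \in \Gamma$ lies below $x+s$ in the sense that $(y+s) + (z+s-\text{stuff})$... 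Here I expect the cleanest route is: since $y+s, z+s \in \Gamma$ and their "sum minus $x+s$" is $s \notin \Gamma$, a direct subtraction argument won't immediately work, so instead use that $y + s \le x + s$ componentwise after decomposition and that Apéry membership is inherited downward within $\Gamma$ along the $a_1a_2$-direction.

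**Expected main obstacle and conclusion.** The main obstacle is precisely the bookkeeping in the last step: showing that the components $y_1, y_2, z_1, z_2$ obtained from decomposing $y$ and $z$ have the property that $y_1 + s_1, z_1 + s_1 \in \mathrm{Ap}(\Gamma_1, a_2)$ and $y_2 + s_2, z_2 + s_2 \in \mathrm{Ap}(\Gamma_2, a_1)$, i.e.\ that the shifts $s_1, s_2$ are compatible with \emph{all four} of $y, y+s, z, z+s$ simultaneously and not just with $x, x+s$. I would handle this by noting that $y + s, z+s \in \Gamma$, writing each in Apéry-decomposed form with respect to $a_1a_2$ (legitimate once $y+s, z+s$ are shown to be in $\mathrm{Ap}(\Gamma, a_1a_2)$, which in case~\eqref{Ap1} I get from the fact that $x + 2s$ — while possibly not in the hypothesis — still satisfies that $y+s, z+s$ individually cannot exceed the Apéry set because $y + s - a_1a_2 \in \Gamma$ would combine with $z \in \Gamma$, wait), and then invoking uniqueness of decompositions to match them up with $y_1 + s_1$, etc. Once all the componentwise identities $x_1 = y_1 + z_1$, $x_2 = y_2 + z_2$ and the six bullet-point memberships are in hand, the final displayed chain of equalities is immediate: $a_1\bigl((y_1;s_1;1)+(z_1;s_1;1)\bigr) + a_2\bigl((y_2;s_2;1)+(z_2;s_2;1)\bigr) = a_1(y_1+z_1; s_1; 2) + a_2(y_2+z_2;s_2;2) = a_1(x_1;s_1;2) + a_2(x_2;s_2;2) = (x;s;2)$, using only the additivity $(a;s;n)+(b;s;m) = (a+b;s;n+m)$ and $x = a_1x_1+a_2x_2$, $s = a_1s_1+a_2s_2$.
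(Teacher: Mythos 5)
Your skeleton matches the paper's in outline: uniqueness of the decomposition $\mathrm{Ap}(\Gamma,a_1a_2)=a_1\mathrm{Ap}(\Gamma_1,a_2)+a_2\mathrm{Ap}(\Gamma_2,a_1)$, the observation that a $\Gamma$-summand of an element of $\mathrm{Ap}(\Gamma,a_1a_2)$ again lies in $\mathrm{Ap}(\Gamma,a_1a_2)$, and componentwise matching. But there is a genuine gap exactly at the point you yourself flag as the main obstacle, and you do not close it. First, the membership $y+s,\ z+s\in\mathrm{Ap}(\Gamma,a_1a_2)$ in case \eqref{Ap1} needs no reference to $x+2s$: it follows from $(y+s)+z=y+(z+s)=x+s\in\mathrm{Ap}(\Gamma,a_1a_2)$, which is the identity the paper uses; this is the thought you begin (``$y+s-a_1a_2\in\Gamma$ would combine with $z\in\Gamma$'') but abandon with ``wait''. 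Second, and more seriously, once $y+s\in\mathrm{Ap}(\Gamma,a_1a_2)$ is known, ``invoking uniqueness of decompositions to match them up with $y_1+s_1$'' is circular: uniqueness only says the decomposition of $y+s$ with components in $\mathrm{Ap}(\Gamma_1,a_2)$ and $\mathrm{Ap}(\Gamma_2,a_1)$ is unique; it cannot identify that decomposition with $(y_1+s_1,\,y_2+s_2)$ unless you already know $y_1+s_1\in\mathrm{Ap}(\Gamma_1,a_2)$ and $y_2+s_2\in\mathrm{Ap}(\Gamma_2,a_1)$, which is precisely what is to be proved (note $s_1$ or $s_2$ may be negative, so $y_1+s_1$ need not even lie in $\Gamma_1$ a priori).

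The paper closes this step with an argument you would still need to supply. Since $y+s\in\mathrm{Ap}(\Gamma,a_1a_2)$ and $\gcd(a_1,a_2)=1$, its Ap\'ery decomposition has the form $a_1(y_1+s_1+ua_2)+a_2(y_2+s_2-ua_1)$ for some integer $u$, with both components in the respective Ap\'ery sets. Then $(y_1+s_1+ua_2)+z_1=x_1+s_1+ua_2\in\Gamma_1$; if $u<0$, adding $(-u-1)a_2\in\Gamma_1$ would put $x_1+s_1-a_2$ in $\Gamma_1$, contradicting $x_1+s_1\in\mathrm{Ap}(\Gamma_1,a_2)$, so $u\ge 0$; the symmetric argument on the $\Gamma_2$ side, using $x_2+s_2\in\mathrm{Ap}(\Gamma_2,a_1)$ and $a_1\in\Gamma_2$, gives $u\le 0$, hence $u=0$, and likewise for $z+s$. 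This is where the identities $x_1=y_1+z_1$, $x_2=y_2+z_2$ and the Ap\'ery membership of $x_1+s_1$, $x_2+s_2$ are actually used; without some such argument the matching step does not follow. Two smaller points: your deduction $x_1=y_1+z_1$ ``by uniqueness'' also needs the one-line check that $y_1+z_1\in\mathrm{Ap}(\Gamma_1,a_2)$ and $y_2+z_2\in\mathrm{Ap}(\Gamma_2,a_1)$ (immediate from $x\in\mathrm{Ap}(\Gamma,a_1a_2)$), and case \eqref{Ap2} is indeed handled as you suggest, by applying case \eqref{Ap1} with $x'=x+2s$ and $s'=-s$, since nothing in the argument requires $s>0$.
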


\begin{proof} 
\eqref{Ap1}: Since $\Gamma=a_1\Gamma_1+a_2\Gamma_2$ is a gluing, there exist $y_1,\ z_1$ in $\Gamma_1$ and $y_2,\ z_2$ in $\Gamma_2$ such that $y=a_1y_1+a_2y_2$ and $z=a_1z_1+a_2z_2$.  Since $a_1$ and $a_2$ are relatively prime, it follows that there exists and integer $c$ such that $x_1=y_1+z_1+a_2c$ and $x_2=y_2+z_2-a_2c$.  Since $x_1$ is in $\mathrm{Ap}(\Gamma_1,a_2)$ and $y_1+z_1\in\Gamma_1$, it follows that $c\leq 0$. Similarly since $x_2$ is in $\mathrm{Ap}(\Gamma_2,a_1)$ and $y_2+z_2$ is in $\Gamma_2$, it follows that $c\geq 0$. Thus 
$x_1=y_1+z_1$ and $x_2=y_2+z_2$. As $y+(z+s)=(y+s)+z=x$ is in $\mathrm{Ap}(\Gamma,a_1a_2)$, it follows that $y,\ z,\ y+s$ and $z+s$ are in $\mathrm{Ap}(\Gamma,a_1a_2)$.  Thus $x_1,\ y_1$ and $z_1$ are in $\mathrm{Ap}(\Gamma_1,a_2)$ and $x_2,\ y_2$ and $z_2$ are in $\mathrm{Ap}(\Gamma_2,a_1)$. Since $x+s$ is in $\mathrm{Ap}(\Gamma,a_1a_2)$, there exist $s_1$ and $s_2$ in $\mathbb{Z}$ such that $x+s=a_1(x_1+s_1)+a_2(x_2+s_2)$ with 
$x_1+s_1$ in $\mathrm{Ap}(\Gamma_1,a_2)$ and $x_2+s_2$ in $\mathrm{Ap}(\Gamma_2,a_1)$.  It follows that $s=a_1s_1+a_2s_2$. 
Since $a_1$ and $a_2$ are relatively prime, it follows that there exists an integer $u$ such that $y+s$ factors as a sum $a_1(y_1+s_1+ua_2)+a_2(y_2+s_2-ua_1)$ with $y_1+s_1+ua_2$ in $\mathrm{Ap}(\Gamma_1,a_2)$ and 
$y_2+s_2-ua_1$ in $\mathrm{Ap}(\Gamma_2,a_1)$. Similarly there exists and integer $v$ such that $z+s=a_1(z_1+s_1+va_2)+a_2(z_2+s_2-va_1)$ with $z_1+s_1+va_2$ in $\mathrm{Ap}(\Gamma_1,a_2)$ and 
$z_2+s_2-va_1$ in $\mathrm{Ap}(\Gamma_2,a_1)$. By using that $x_1+s_1$ is in $\mathrm{Ap}(\Gamma_1,a_2)$ and $(y_1+s_1+ua_2)+z_1=x_1+s_1+ua_2$ in $\Gamma_1$, it follows that $u\geq 0$. Since $x_2+s_2$ is in 
$\mathrm{Ap}(\Gamma_2,a_1)$
 and $(y_2+s_2-ua_1)+z_2=x_2+s_2-ua_1$ is in $\Gamma_2$, it follows that $u\leq 0$.  Thus $u=0$. Similarly $v=0$ and Case \eqref{Ap1} follows. 

The proof of Case \eqref{Ap1} does not require that $s>0$.  If we set $x'=x+2s$ and $s'=-s$, then Case \eqref{Ap2} follows by a similar argument. 
\end{proof}

\begin{prop} \label{PropGlue}
Let $\Gamma_1$ and $\Gamma_2$ be symmetric numerical semigroups and let $\Gamma=a_1\Gamma_1+a_2\Gamma_2$ be a gluing of $\Gamma_1$ and $\Gamma_2$. Let $s$ be in $\mathbb{N}\setminus\Gamma$ such that $s$ is not in $a_1\mathbb{N}\cup a_2\mathbb{N}$.  Then $(0,s)$ is a Huneke-Wiegand relative ideal of $\Gamma$.
\end{prop}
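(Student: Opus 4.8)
The plan is to produce an irreducible sequence $(x;s;2)$ in $S_\Gamma^s$; by Remark~\ref{irreducible-seq-HW} this is exactly what is needed. Since $a_1a_2\in\Gamma\setminus\{0\}$ and $\Gamma$ is symmetric (a gluing of symmetric numerical semigroups is again symmetric), Lemma~\ref{shift-apery} already supplies such a sequence whenever $s\notin\Delta(\mathrm{Ap}(\Gamma,a_1a_2))$. So from now on assume $s\in\Delta(\mathrm{Ap}(\Gamma,a_1a_2))$, and fix $p$ with $p,\,p+s\in\mathrm{Ap}(\Gamma,a_1a_2)$.

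First I would descend the step size. By Remark~\ref{ap-gluing}, $\mathrm{Ap}(\Gamma,a_1a_2)=a_1\mathrm{Ap}(\Gamma_1,a_2)+a_2\mathrm{Ap}(\Gamma_2,a_1)$, and each element of $\mathrm{Ap}(\Gamma,a_1a_2)$ has a unique such expression. Writing $p=a_1p_1+a_2p_2$ and $p+s=a_1q_1+a_2q_2$ accordingly and putting $s_i=q_i-p_i$, one gets $s=a_1s_1+a_2s_2$ with $p_1,p_1+s_1\in\mathrm{Ap}(\Gamma_1,a_2)$ and $p_2,p_2+s_2\in\mathrm{Ap}(\Gamma_2,a_1)$. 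Since $s>0$ and $s\notin a_1\mathbb N\cup a_2\mathbb N$, both $s_i$ are nonzero and at least one is positive, so after possibly swapping the two semigroups we may take $s_1>0$. Crucially, $s_1\in\Gamma_1$ and $s_2\in\Gamma_2$ cannot hold simultaneously, since that would force $s=a_1s_1+a_2s_2\in\Gamma$. So we are in one of the cases: (a) $s_1\notin\Gamma_1$, hence $s_1\notin\mathrm{Ap}(\Gamma_1,a_2)$; or (b) $s_1\in\Gamma_1$ and $s_2\notin\Gamma_2$. In case (a) I will aim at a contradiction on the $\Gamma_1$-side, in case (b) on the $\Gamma_2$-side; note in case (b) that if $s_2>0$ then $s_2\notin\mathrm{Ap}(\Gamma_2,a_1)$, while if $s_2<0$ then any $w\in\mathrm{Ap}(\Gamma_2,a_1)$ with $w+s_2\in\mathrm{Ap}(\Gamma_2,a_1)$ is automatically nonzero (as $w+s_2\ge 0$).

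Next I would build the sequence. Using the symmetry of $\Gamma_1$ and $\Gamma_2$ (so that $w\in\mathrm{Ap}(\Gamma_i,a_j)$ iff $\mathrm F(\Gamma_i)+a_j-w\in\mathrm{Ap}(\Gamma_i,a_j)$, and that sliding a term of an Ap\'ery set up to its maximum $\mathrm F(\Gamma_i)+a_j$ makes a further addition of $s_i$ land past $\mathrm F(\Gamma_i)$), together with a suitable choice of the pair $(p,p+s)$, I would choose $x_1\in\mathrm{Ap}(\Gamma_1,a_2)$ and $x_2\in\mathrm{Ap}(\Gamma_2,a_1)$ with $x_1+s_1\in\mathrm{Ap}(\Gamma_1,a_2)$, $x_2+s_2\in\mathrm{Ap}(\Gamma_2,a_1)$, $x_1+2s_1\in\Gamma_1$, $x_2+2s_2\in\Gamma_2$, and with the term on the singled-out side a minimal generator of that semigroup other than $a_2$ (resp.\ $a_1$), hence not a sum of two nonzero elements of the relevant Ap\'ery set. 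Put $x=a_1x_1+a_2x_2$. Then $x+is=a_1(x_1+is_1)+a_2(x_2+is_2)\in\Gamma$ for $i=0,1,2$, and $x,x+s\in\mathrm{Ap}(\Gamma,a_1a_2)$ by the gluing description; thus $(x;s;2)\in S_\Gamma^s$ and meets the hypothesis of Lemma~\ref{Apfactor}~\eqref{Ap1} (or of \eqref{Ap2}, which I would use instead when $s_2<0$). If $(x;s;2)=(y;s;1)+(z;s;1)$ in $S_\Gamma^s$, then $y,z\ne 0$ since $s\notin\Gamma$, and Lemma~\ref{Apfactor} descends this to $x_1=y_1+z_1$, $x_2=y_2+z_2$ with $y_i,z_i,y_i+s_i,z_i+s_i$ in the respective Ap\'ery sets. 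On the singled-out side, $x_i$ being a minimal generator forces $y_i=0$ or $z_i=0$; but $y_i=0$ would give $s_i=y_i+s_i\in\mathrm{Ap}(\Gamma_i,a_j)$ (case $s_i>0$) or contradict $y_i+s_i\ge 0$ (case $s_i<0$), so $y_i\ne 0$, and likewise $z_i\ne 0$ --- a contradiction. Hence $(x;s;2)$ is irreducible, so $(0,s)$ is Huneke-Wiegand.

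I expect the main obstacle to be the construction in the previous paragraph: simultaneously meeting the four Ap\'ery/semigroup-membership constraints on $x_1$ (and on a valid $x_2$) while forcing the term on the designated side to be unsplittable inside its Ap\'ery set. This is where the symmetry of $\Gamma_1$ and $\Gamma_2$ is essential --- reflecting $(p_i,q_i)$ within $\mathrm{Ap}(\Gamma_i,a_j)$, or moving a term to the top of the Ap\'ery set so adding $s_i$ escapes past $\mathrm F(\Gamma_i)$ --- and the bookkeeping has to branch on the sign of $s_2$ and on whether $s_i\in\Gamma_i$, always using that $s_1\in\Gamma_1$ and $s_2\in\Gamma_2$ cannot both hold, and choosing between parts \eqref{Ap1} and \eqref{Ap2} of Lemma~\ref{Apfactor} accordingly. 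Degenerate situations, such as one of the $\Gamma_i$ being $\mathbb N$, fall under Theorem~\ref{hw-free}.
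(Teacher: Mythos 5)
Your opening reductions match the paper: Lemma~\ref{shift-apery} disposes of all $s\notin\Delta(\mathrm{Ap}(\Gamma,a_1a_2))$, and Remark~\ref{ap-gluing} gives the decomposition $s=a_1s_1+a_2s_2$ with $p_1,p_1+s_1\in\mathrm{Ap}(\Gamma_1,a_2)$ and $p_2,p_2+s_2\in\mathrm{Ap}(\Gamma_2,a_1)$, and indeed $s_1\in\Gamma_1$ and $s_2\in\Gamma_2$ cannot both hold. But the heart of the proof --- actually producing $x_1,x_2$ satisfying all the membership constraints you list, with the component on the singled-out side a minimal generator --- is only announced, and you yourself identify it as the main obstacle. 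This is a genuine gap, not bookkeeping. The conditions you impose for the \eqref{Ap1}-route can simply be unattainable: for $\Gamma_1=\langle 4,5,6\rangle$ and $a_2=4$ one has $\mathrm{Ap}(\Gamma_1,4)=\{0,5,6,11\}$, and for $s_1=1$ the only pair in the Ap\'ery set with difference $s_1$ is $(5,6)$, while $5+2s_1=7\notin\Gamma_1$; so no choice of $x_1$ meets ``$x_1,x_1+s_1\in\mathrm{Ap}(\Gamma_1,a_2)$ and $x_1+2s_1\in\Gamma_1$,'' let alone with $x_1$ a minimal generator. Consequently your rule of choosing between Lemma~\ref{Apfactor}~\eqref{Ap1} and \eqref{Ap2} according to the sign of $s_2$ cannot be right: the branching has to respond to whether $w_i+2s_i$ and $w_i-s_i$ lie in $\Gamma_i$, which is independent of that sign, and in the mixed case (say $w_1-s_1\notin\Gamma_1$ and $w_2+2s_2\notin\Gamma_2$) neither part of Lemma~\ref{Apfactor} applies to any obvious candidate, so a new idea is needed.

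The paper's proof avoids your minimal-generator device altogether: it takes $w_i$ \emph{minimal as an integer} with $w_i,\,w_i+s_i\in\mathrm{Ap}(\Gamma_i,a_j)$ (such $w_i$ always exists, coming from the decomposition of a pair $u,v\in\mathrm{Ap}(\Gamma,a_1a_2)$ with $u-v=s$), and the contradiction is that any factorization, via Lemma~\ref{Apfactor}, yields $y_1,\,y_1+s_1\in\mathrm{Ap}(\Gamma_1,a_2)$ with $y_1<w_1$. The real content is then the case analysis you skip: when $w_2-s_2\notin\Gamma_2$ one reflects by symmetry and works with $x_2=\mathrm F(\Gamma_2)-w_2-s_2+a_1$, and in the worst case one introduces the smallest $h$ with $w_1-s_1+ha_2\in\Gamma_1$ or $w_2+2s_2+ha_1\in\Gamma_2$ and builds a three-term sequence $\alpha$ whose last two terms lie in $\mathrm{Ap}(\Gamma,a_1a_2)$ so that Lemma~\ref{Apfactor}~\eqref{Ap2} applies. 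Without an argument of this kind your construction paragraph cannot be completed as written. (A small additional point: for the degenerate case $\Gamma_2=\mathbb N$ you should invoke Lemma~\ref{lem:nmid} directly rather than Theorem~\ref{hw-free}, whose conclusion presupposes that $\Gamma_1$ is Huneke-Wiegand, which is not a hypothesis here; the paper's argument needs no such case split.)
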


\begin{proof}
By Lemma \ref{shift-apery} we may assume that $s$ is in $\Delta(\mathrm{Ap}(\Gamma,a_1a_2))$. Choose $u$ and $v$ in $\mathrm{Ap}(\Gamma,a_1a_2)$ such that $u-v=s$.  We may write $u$ and $v$ uniquely as 
$u=a_1u_1+a_2u_2$ and $v=a_1v_1+a_2v_2$ with $u_1,\ v_1$ in $\mathrm{Ap}(\Gamma_1,a_2)$ and $u_2,\ v_2$ in $\mathrm{Ap}(\Gamma_2,a_1)$. Let $s_1=u_1-v_1$ and $s_2=u_2-v_2$. Choose $w_1$ and $w_2$ minimal  such that 
$w_1$ and $w_1+s_1$ are in $\mathrm{Ap}(\Gamma_1,a_2)$ and $w_2$ and $w_2+s_2$ are in $\mathrm{Ap}(\Gamma_2,a_1)$. 
Also let $F_1:=F(\Gamma_1)$ and $F_2:=F(\Gamma_2)$.

Suppose that $w_1+2s_1$ is in $\Gamma_1$ and $w_2+2s_2$ is in $\Gamma_2$.  Assume that the sequence 
$(a_1w_1+a_2w_2;s,2)$ factors as $(y,y+s)+(z,z+s)$.  Then we may apply Lemma \ref{Apfactor}~\eqref{Ap1} with 
$x_1=w_1$ and $x_2=w_2$.  It follows that $y=a_1y_1+a_2y_2$ with $y_1$ and $y_1+s_1$ in 
$\mathrm{Ap}(\Gamma_1,a_2)$. Since $w_1=y_1+z_1$ and $z_1>0$, we have $y_1<w_1$.  However, this contradicts the minimality of $w_1$.  Thus $(a_1w_1+a_2w_2;s;2)$ is irreducible.  

Suppose that $w_1-s_1$ is in $\Gamma_1$ and $w_2-s_2$ is in $\Gamma_2$. Then by applying Lemma \ref{Apfactor}~\eqref{Ap2} with $x_1=w_1-s_1$ and $x_2=w_2-s_2$ a similar argument to the one above shows that $(a_1w_1+a_2w_2-s;s,2)$ is irreducible.  

Suppose that $w_1-s_1$ and $w_1+2s_1$ are in $\Gamma_1$.  By excluding previous cases we may assume that 
$w_2-s_2$ is not in $\Gamma_2$.  Thus $w_2-s_2-a_1$ is not in $\Gamma_2$. By the symmetry of $\Gamma_2$, it follows that 
$F_2-w_2-s_2+a_1$ and $F_2-w_2+a_1$ are in $\mathrm{Ap}(\Gamma_2,a_1)$ and $F_2-w_2+s_2+a_1$ is in $\Gamma_2$.  Assume that the sequence $(a_1w_1+a_2(F_2-w_2-s_2+a_1);s;2)$ factors as $(y,y+s)+(z,z+s)$. By applying Lemma \ref{Apfactor}~\eqref{Ap1} with 
$x_1=w_1$ and $x_2=F_2-w_2-s_2+a_1$ we again get elements $y_1$ and $y_1+s_1$ in $\mathrm{Ap}(\Gamma_1,a_)$ with $y_1<w_1$ contradicting the minimality of $w_1$ and implying that $(a_1w_1+a_2(F_2-w_2-s_2+a_1);s;2)$ is irreducible.

Similarly if we suppose that $w_2-s_2$ and $w_2+2s_2$ are in $\Gamma_2$ and that we are not in a previous case, then by a similar argument to the one above, it follows that $(a_1(F_1-w_1-s_1+a_2)+a_2w_2;s;2)$ is irreducible. 

We are left with two cases:
\begin{itemize} 
\item $w_1-s_1\notin\Gamma_1$ and $w_2+2s_2\notin\Gamma_2$; or 
\item $w_2-s_2\notin\Gamma_2$ and $w_1+2s_1\notin\Gamma_1$.
\end{itemize} 
Since these cases are identical up to permuting the subscripts, we may assume that $w_1-s_1$ is not in $\Gamma_1$ and $w_2+2s_2$ is not in $\Gamma_2$.  Let $h$ be the smallest integer such that either 
$w_1-s_1+ha_2$ is in $\Gamma_1$ or $w_2+2s_2+ha_1$ is in $\Gamma_2$. Suppose that $w_1-s_1+ha_2$ is in $\Gamma_1$ and $w_2+2s_2+(h-1)a_1$ is not in $\Gamma_2$.  The case where $w_1-s_1+(h-1)a_2$ is not in $\Gamma_1$ and $w_2+2s_2+(h-1)a_1$ is in $\Gamma_2$ requires a similar argument. Since $\Gamma_2$ is symmetric, $F_2-w_2-2s_2-(h-1)a_1$ is in $\Gamma_2$ and $F_2-w_2-s_2+a_1,\ F_2-w_2+a_1$ are in $\mathrm{Ap}(\Gamma_2,a_1)$.  Thus the elements
\begin{align*}
a_1(w_1-s_1+ha_2)+a_2(F_2-w_2-2s_2-(h-1)a_1),\\
a_1w_1+a_2(F_2-w_2-s_2+a_1)\\
\textrm{ and }\quad a_1(w_1+s_1)+a_2(F_2-w_2+a_1)
\end{align*}
in $\Gamma$ form a sequence  in $S_{\Gamma}^s$, which we will denote by $\alpha$. Assume that $\alpha$ factors as $(y,y+s)+(z,z+s)$. We may apply Lemma \ref{Apfactor}~\eqref{Ap2} with $x_1=w_1-s_1$ and $x_2=F_2-w_2-2s_2+a_1$. Thus we get elements $y_1,\ z_1,\ y_1+s_1$ and $z_1+s_1$ in $\mathrm{Ap}(\Gamma_1,a_2)$ all non-zero such that $w_1=y_1+(z_1+s_1)$.  It follows that $y_1<w_1$.  Since this contradicts the minimality of $w_1$, we deduce that $\alpha$ must have been irreducible and the result follows from Remark \ref{irreducible-seq-HW}.  
\end{proof}

\begin{thm}\label{hw-gluing}
Let $\Gamma=a_1\Gamma_1+a_2\Gamma_2$ be a gluing of symmetric numerical semigroups $\Gamma_1$ and $\Gamma_2$. Assume that all two-generated relative ideals of $\Gamma_1$ and of $\Gamma_2$ are Huneke-Wiegand. Then every two-generated relative ideal of $\Gamma$ is Huneke-Wiegand.
\end{thm}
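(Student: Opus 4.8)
The plan is to reduce an arbitrary two-generated relative ideal of $\Gamma$ to a case that has already been settled. First, if $A$ is principal then it is Huneke-Wiegand by definition, so we may assume $A$ has a minimal generating set of size two. By Remark \ref{shifting} we may translate $A$ so that this generating set is $\{0,s\}$ with $s>0$; minimality then forces $s\notin\Gamma$, so $s$ is a gap of $\Gamma$. Thus everything comes down to showing that $(0,s)$ is Huneke-Wiegand whenever $s\in\mathbb N\setminus\Gamma$.

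I would then split according to the divisibility of $s$ by $a_1$ and $a_2$. Since $\gcd(a_1,a_2)=1$, if both $a_1\mid s$ and $a_2\mid s$ then $a_1a_2\mid s$; but $a_1a_2\in\Gamma$, which would force $s\in\Gamma$, a contradiction. Hence at most one of $a_1,a_2$ divides $s$, and we are in one of three exhaustive cases. If $s\notin a_1\mathbb N\cup a_2\mathbb N$, then Proposition \ref{PropGlue} applies verbatim and $(0,s)$ is Huneke-Wiegand. If $a_1\mid s$, write $s=a_1s'$ with $s'>0$; both generators of $A$ are then divisible by $a_1$, so Remark \ref{rem:mult} gives $A=a_1A_1+a_2\Gamma_2$ with $A_1=(0,s')$ a relative ideal of $\Gamma_1$, and $A$ is Huneke-Wiegand if and only if $A_1$ is. Now $A_1$ is generated by at most two elements, so it is either principal or a two-generated relative ideal of $\Gamma_1$; in either case it is Huneke-Wiegand, the latter by hypothesis. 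Finally, if $a_2\mid s$, the same argument works after observing that $\Gamma=a_2\Gamma_2+a_1\Gamma_1$ is again a gluing presentation, so Remark \ref{rem:mult} and Proposition \ref{mult} apply with the roles of $\Gamma_1$ and $\Gamma_2$ interchanged, reducing to a two-generated relative ideal of $\Gamma_2$.

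There is essentially no obstacle left at this stage: the theorem is an assembly of Proposition \ref{mult}, Remark \ref{rem:mult}, and Proposition \ref{PropGlue}, where the last of these is the genuinely difficult ingredient (resting on the Ap\'ery-set bookkeeping of Lemma \ref{Apfactor}). The only point in the present argument that needs a moment's thought is the exhaustiveness of the three cases, i.e.\ that a gap of $\Gamma$ cannot be divisible by $a_1a_2$, which is immediate from $a_1a_2\in\Gamma$.
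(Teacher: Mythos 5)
Your proof is correct and follows essentially the same route as the paper: shift to $A=(0,s)$ with $s$ a gap, handle $s\in a_1\mathbb N\cup a_2\mathbb N$ via Remark \ref{rem:mult} (and the hypothesis on $\Gamma_1$, $\Gamma_2$), and handle $s\notin a_1\mathbb N\cup a_2\mathbb N$ via Proposition \ref{PropGlue}. The extra observation that $a_1$ and $a_2$ cannot both divide a gap is harmless but not needed, since the cases are already exhaustive.
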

\begin{proof}
Let $A$ be a two generated ideal of $\Gamma$. By Remark \ref{shifting}, we may assume that $A=(0,s)$, with $s$ in $\mathbb N\setminus \Gamma$. If  $s$ is in $a_1\mathbb N\cup a_2\mathbb N$, the result follows by Remark \ref{rem:mult}. For $s$ not in $a_1\mathbb N\cup a_2\mathbb N$, apply Proposition \ref{PropGlue}.
\end{proof}


A numerical semigroup $\Gamma$ is \emph{complete intersection} if $k[\Gamma]$ is complete intersection. C. Delorme proved in \cite{Delorme} that a numerical semigroup other than $\mathbb{N}$ is complete intersection if and only if it is the gluing of two complete intersection numerical semigroups with fewer generators. Hence by iteratively applying Theorem \ref{hw-gluing}, we get the following Corollary. Note that he also proved that a gluing is symmetric if and only if each factor is symmetric.

\begin{cor}\label{complete-intersection}
Two-generated ideals in complete intersection numerical semigroups are Huneke-Wiegand.
\end{cor}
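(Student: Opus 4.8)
The plan is to prove Corollary \ref{complete-intersection} by induction on the embedding dimension of the complete intersection numerical semigroup $\Gamma$ (equivalently, on the cardinality of its minimal generating set). For the base case, a numerical semigroup of embedding dimension one has a single minimal generator $a$, and $\gcd(\Gamma)=1$ forces $a=1$, so $\Gamma=\mathbb N$; every relative ideal of $\mathbb N$ is of the form $x+\mathbb N$, hence principal, hence Huneke-Wiegand by definition, and in particular every two-generated relative ideal is.

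For the inductive step, let $\Gamma$ be a complete intersection numerical semigroup whose minimal generating set has cardinality $n\geq 2$, and assume the statement holds for all complete intersection numerical semigroups with fewer generators. Since $\Gamma\neq\mathbb N$, Delorme's theorem \cite{Delorme} lets us write $\Gamma=a_1\Gamma_1+a_2\Gamma_2$ as a gluing of two complete intersection numerical semigroups $\Gamma_1$ and $\Gamma_2$, each with strictly fewer generators than $\Gamma$: the embedding dimension of a gluing is the sum of the embedding dimensions of its factors, and each factor has embedding dimension at least one. Because complete intersection numerical semigroup rings are Gorenstein, $\Gamma_1$ and $\Gamma_2$ are symmetric; alternatively this is Delorme's remark that a gluing is symmetric exactly when both of its factors are, applied to the fact that $\Gamma$ is itself symmetric. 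By the inductive hypothesis every two-generated relative ideal of $\Gamma_1$ and of $\Gamma_2$ is Huneke-Wiegand, so Theorem \ref{hw-gluing} applies and yields that every two-generated relative ideal of $\Gamma$ is Huneke-Wiegand. This closes the induction.

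I do not expect a genuine obstacle at this last stage: the substantive work has already been done in Theorem \ref{hw-gluing}, and underlying it in Proposition \ref{PropGlue} and Lemma \ref{Apfactor}. The only points deserving care are bookkeeping ones, namely confirming that the factors produced by Delorme's decomposition are again symmetric so that Theorem \ref{hw-gluing} may be invoked, and checking that the induction parameter (number of minimal generators) strictly decreases when passing from $\Gamma$ to $\Gamma_1$ and $\Gamma_2$. Both are immediate from standard properties of gluings, so the corollary follows by a clean induction.
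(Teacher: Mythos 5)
Your proposal is correct and matches the paper's argument: the paper likewise invokes Delorme's characterization of complete intersection numerical semigroups as iterated gluings (with the remark that the factors are symmetric) and applies Theorem \ref{hw-gluing} inductively, with $\mathbb N$ as the trivial base case. Your write-up just makes the induction explicit.
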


\begin{ex}\label{ex-non-gluing}
 By using the \texttt{numericalsgps GAP} package (\cite{numericalsgps}) we checked that two-generated ideals of symmetric numerical semigroups with Frobenius number less than 69 are Huneke-Wiegand. Let $X_1$ be the set of symmetric numerical semigroups with Frobenius number less 69.  For each $i$ in $\mathbb{N}$ let $X_{i+1}$ be $X_i$ union the set of gluings of pairs of numerical semigroups  in $X_i$.  Then two generated ideals of numerical semigroups in $\cup_{i=1}^{\infty}X_i$ are Huneke-Wiegand.
\end{ex}

\section{Future work}

It seems reasonable that methods similar to those in this paper might be effective for proving slightly more general cases of the  Huneke-Wiegand Conjecture. Some natural next steps would be to answer the following questions:
\begin{itemize}
\item Do two generated ideals over complete intersection discrete valuation rings satisfy the Huneke-Wiegand Conjecture? 
\item   In \cite[(3.2)]{Herzinger}, Herzinger shows that the Huneke-Wiegand Conjecture holds for any two generated ideal $I$ over a one-dimensional local Gorenstein domain $R$, such that $\Hom_R(I,R)$ is also two generated. For numerical semigroup rings, can we remove the condition that $\Hom_R(I,R)$ is also two generated and still show that the Huneke-Wiegand Conjecture holds?
\item Is the Huneke-Wiegand property for relative ideals with more than two generators invariant under gluings of numerical semigroups?
\item If $\Gamma$ is Huneke-Wiegand do graded modules over $k[\Gamma]$ satisfy the Huneke-Wiegand Conjecture?
\end{itemize}
Another way of viewing gluings from the ring perspective is to take two numerical semigroup rings $R_1=k[\Gamma_1]$ and $R_2:=k[\Gamma_2]$.  Then 
$k[a_1\Gamma_1+a_2\Gamma_2]\cong \frac{R_1\otimes_k R_2}{(t^{a_2}\otimes 1)-(1\otimes t^{a_1})}$, where multiplication in $R_1\otimes_kR_2$ is defined component-wise.
A natural generalization from this perspective is to ask the following question.  Let  $R_1$ and $R_2$ be two commutative local domains each containing the same residue field $k$, which satisfy the Huneke-Wiegand Conjecture. If $(f\otimes 1)-(1\otimes g)$ is a regular, irreducible element of $R_1\otimes_k R_2$ what can we say about the ring 
$R:=\frac{R_1\otimes_k R_2}{(f\otimes 1)-(1\otimes g)}$? A good starting place might be to try and generalize Proposition \ref{prop:extension} to this setting: Given an $R_1$-module $M_1$ that satisfies the Huneke-Wiegand Conjecture can we show that the $R$-module $\frac{M_1\otimes_kM_2}{((t^{a_2}\otimes 1)-(1\otimes t^{a_1}))(M_1\otimes_kM_2)}$ satisfies the Huneke-Wiegand Conjecture for every $R_2$-module $M_2$.

\end{document}